\colorlet{DarkGreen}{green!70!black}
\numberwithin{equation}{section}
\newtheorem{remark}[theorem]{Remark}
\title{Superconvergent Gradient Recovery   for Virtual Element Methods}
\author{Hailong Guo\thanks{School of Mathematics and Statistics,  The University of Melbourne,  Parkville, VIC 3010, Australia   (hailong.guo@unimelb.edu.au).
This work was partially supported by Andrew Sisson Fund of the University of Melbourne. }
\and%
 Cong Xie\thanks{College of Mathematics and Systems Science, Xinjiang University, Urumqi, 830046, P.R. China and
College of Mathematics and Physics, Hebei University of Architecture, Zhangjiakou, 075000, P. R. China(xiecong121@163.com).}
 \and%
 Ren Zhao\thanks{School of Mechanical Engineering and Automation, Harbin Institute of Technology,
  Shenzhen, 518055, P. R. China (zhaoren@hit.edu.cn
).}}
\begin{document}

\maketitle

%
%
\medskip

\begin{abstract}
Virtual element method is a new promising finite element  method using general polygonal meshes.  Its optimal {\it a priori} error estimates are
well established  in literature.   In this paper, we take a different viewpoint.   We try to  uncover the superconvergent
property of  virtual element methods by doing some local  post-processing only  on the degrees of freedom.   Using the linear virtual element method
as an example, we propose a universal gradient recovery procedure to improve the accuracy of gradient approximation for numerical methods using general
polygonal meshes.  Its capability of serving  as {\it a posteriori} error estimators in adaptive computation is also investigated.  Compared to the existing
residual-type {\it a posteriori} error estimators for the  virtual element methods, the recovery-type {\it a posteriori} error estimator based on the proposed
gradient recovery technique is much  simpler in implementation and it is asymptotically exact.   A series of  benchmark tests are presented to numerically
illustrate the superconvergence of recovered gradient and validate the asymptotic exactness of the recovery-based {\it a posteriori }error estimator.
\vskip .3cm
{\bf AMS subject classifications.} \ {Primary 65N30, 65N12; Secondary 65N15, 53C99}
\vskip .3cm

{\bf Key words.} \ {Gradient recovery, superconvergence, polynomial preserving, virtual element method, recovery-based, {\it a posteriori} error estimator, polygonal mesh}
\end{abstract}


\section{Introduction}
\label{sec:int}
  The idea of using polygonal elements can be  traced back to Wachspress \cite{Wa1975}.    After that, there has been tremendous interest in developing finite element/difference  methods using general polygons, see the review paper \cite{MRS2014} and the references therein.   Well-known examples include
  the  polygonal finite element methods\cite{ST2004, SM2006},   mimetic finite difference methods\cite{BLM2014,Sh1996,KR2003, HS1999, SS1996},  hybrid high-order methods\cite{DEL2014, DE2015},  polygonal discontinuous Galerkin methods \cite{MWWY2014}, etc.

Virtual element methods  evolve from the mimetic finite difference methods \cite{BBL2009,BLM2011} within the framework of the finite element methods.
It was first proposed for the Poisson equations \cite{BBCMMR2013}. Thereafter,  it has been developed  to many other equations \cite{BBMR2016, CMS2017, ADLP2017, BM2013}. It generalizes the classical finite element methods    from simplexes   to general polygons/polyhedrons including non-convex ones.  This enables the virtual element methods with the capability of dealing with polygons/polyhedrons with arbitrary numbers
of edges/faces and coping with more general  continuity.  This makes the virtual element methods handle hanging nodes naturally and simplifies the procedure of adaptive mesh refinement.   Different from other polygonal finite element methods, the non-polynomial basis functions are never explicitly constructed and   evaluating non-polynomial functions
is totally unnecessary. Consequently, the only available data in virtual element methods  are the degrees of freedom.
  The optimal convergence theory  was well established in \cite{BBCMMR2013,BBMR2016}.

In many cases,  the gradient of a solution attracts much more attention than the solution itself.  That is due to two different aspects: (i) gradient
has physical meaning like momentum, pressure, et. al; (ii) many problems, like the free boundary value problems and moving interface problems, depend on the first order derivatives of the solution.  For virtual element methods, like their predecessors: standard finite element methods, the gradient approximation accuracy is one order lower than the corresponding solution approximation accuracy. Thus, a more accurate approximate gradient is highly desirable in scientific and engineering computing.

For finite element methods on triangles or quadrilaterals, it is well-known  that gradient recovery  is one of the most important post-processing procedures  to reconstruct a more accurate approximate gradient than the finite element gradient.      The gradient recovery methods are well developed for the classical finite element methods and there are
a massive number of references, to name a few \cite{LMW2000, ZZ1987, ZZ1992a, ZZ1992b, BX2003, ZN2005, NZ2005, GZ2015, XZ2004, GZZ2016b}.
Famous examples include the simple/weighted averaging \cite{ZZ1987}, superconvergent patch recovery \cite{ZZ1992a, ZZ1992b} (SPR), and the polynomial preserving
recovery \cite{ZN2005, NZ2005, NZ2004} (PPR).
Right now, SPR and PPR become standard tools in modern scientific and engineering computing. It is evident by the fact that
SPR is available in many commercial finite element software like ANSYS, Abaqus, LS-DYNA, and  PPR is included in COMSOL Multiphysics.

The first purpose of this paper is to introduce a gradient recovery technique as a post-processing procedure for the linear virtual element method and uncover its superconvergence property.
To recover the gradient on a general polygonal mesh,  the most straightforward idea is to take simple averaging or weighted averaging.  However, we will encounter two difficulties:
first, the values of the gradient  are not computable in the linear virtual element method;  second,  the consistency of the simple averaging or weighted averaging methods depends
strongly on the symmetry of the local patches and sometimes they are inconsistent even on some uniform meshes.   To overcome the first difficulty, one may simply replace  the virtual
element gradient by its polynomial projection.   Then, we are able to apply the simple averaging or weighted averaging methods to   the projected virtual element
gradients.  But we may be at risk of introducing some additional error and computational cost.     Similarly, if we want to generalize SPR to general polygonal meshes,  we also have those two difficulties.  The second difficulty is more severe since  there is no longer  local  symmetric property   for polygonal meshes.
To tackle those difficulties,  we generalize the idea of PPR \cite{ZN2005} to the general polygons, which only uses the degrees of freedom and
 has the consistency on arbitrary polygonal  meshes  by the polynomial preserving property.    We prove the polynomial preserving and boundedness properties of
the generalized gradient recovery operator.  Moreover, the superconvergence of the recovered gradient using the interpolation of the exact solution is theoretically justified.
We also numerically  uncover the superconvergent property of the linear virtual element method.   The recovered gradient is numerically proven to be more accurate
than the virtual element gradient.
The post-processing procedure  also provides a way to visualize the gradient filed which is not directly available in the virtual element methods.

Adaptive computation is an essential tool in scientific and engineer computing. Since the pioneering work of Babu\v{s}ka and Rheinboldt \cite{BR1978} in the 1970s,
there has been a lot of effort devoted to both the theoretical development
of adaptive algorithms and applications of adaptive finite element methods. For classical finite element methods, adaptive finite element methods
have reached a stage of maturity, see the monographs \cite{AO2000, Re2008, Ve2013, BS2001} and the references therein.  For adaptive finite element methods, one of the key ingredients is   to design  {\it a posteriori} error estimators.  In the literature, there are two types of {\it a posteriori} error estimators: residual-type and recovery-type.

For virtual element methods, there are only a few work concerning on the {\it a posteriori }error estimation and adaptive algorithms.
In \cite{BM2015},  Beir\~{a}o Da Veiga and Manzini derived  {\it a posteriori }error estimators for $C^1$ virtual element methods.
In \cite{CGPS2017}, Cangiani et al. proposed {\it a posteriori} error estimators for the $C^0$ conforming virtual element methods
for solving second order general elliptic equations.
In \cite{BB2017},  Berrone and Borio derived a new {\it a posteriori} error estimator for  the $C^0$ conforming virtual element methods using
the projection of the virtual element solution.
In \cite{MRR2017},   Mora et el. conducted {\it a posteriori} error analysis for a virtual element method for the Steklov eigenvalue problems.  All the above {\it a posteriori} error estimators are residual-type.   To the best of our knowledge,  there is no recovery-type
{\it a posteriori} error estimators for virtual element methods yet.

The second purpose of this paper is to present a recovery-type {\it a posteriori} error estimator for the linear virtual element method.
But for the virtual element methods, there is no explicit formulation for the basis functions.   To construct a fully computable
{\it a posteriori} error estimator,   we propose to use the gradient of polynomial  projection of  virtual element solution subtracting
the polynomial projection of the recovered gradient.   Compared with the existing residual-type {\it a posteriori} error estimators  \cite{BM2015, CGPS2017, BB2017},
the error estimator has only one term and hence it is much simpler.    The error estimator is numerically proven to be asymptotically exact,
which makes it more favorable  than other  {\it a posteriori} error estimators for virtual element methods.

The rest of the paper is organized as follows. In Section 2,  we introduce  the model problem and  related notations.
In Section 3,  we present the construction of  the linear virtual element space and the definition of  discrete formulation of the problem.
In Section 4, we propose the gradient recovery procedure and prove the consistency and boundedness of the proposed
gradient recovery operator.  The recovery-based {\it a posteriori} error estimator is constructed in Section 5.
In Section 6,  the superconvergent property of the gradient recovery operator and the asymptotic exactness of the
recovery-based error estimator is numerically verified.  Some conclusions are drawn in Section 7.

\section{Model Problems}
\label{sec:model}
Let $\Omega\subset \mathbb{R}^2$ be a bounded polygonal domain with Lipschitz boundary $\partial\Omega$.
 Throughout  this paper, we adopt the standard
notations for Sobolev spaces and their associate norms given in \cite{BS2008, Ci2002, Evans2008}.
For a subdomain $\mathcal{D}$ of $\Omega$,  let $W^{k,p}(\mathcal{D})$ denote the Sobolev space with norm
$\|\cdot\|_{k, p, \mathcal{D}} $ and seminorm $|\cdot|_{k, p,\mathcal{D}}$.
 When $p = 2$, $W^{k,2}(\mathcal{D})$ is simply  denoted by $H^{k}(\mathcal{D})$
 and the subscript $p$ is omitted in its associate norm and seminorm.
 $(\cdot, \cdot)_{\mathcal{D}}$ denotes the standard $L_2$ inner product on $\mathcal{D}$ and the subscript
 is ignored when $\mathcal{D} = \Omega$.
Let $\mathbb{P}_m(\mathcal{D})$ be the space of polynomials of
degree less than or equal to $m$ on $\mathcal{D}$ and $n_m$ be the
dimension of $\mathbb{P}_m(\mathcal{D})$ which equals to $\frac{1}{2}(m+1)(m+2)$.

Our  model problem is the following Poisson equation
\begin{align}
 -\Delta u=f &\text{  in }\Omega,\label{equ:model}\\
    u=0 & \text{  on }\partial\Omega.\label{equ:bndcond}
\end{align}
The homogeneous Dirichlet boundary condition is   considered for the sake of clarity.  Inhomogeneous Dirichlet
and other types of boundary conditions  apply as well without  substantial modification.

Define the bilinear form $a(\cdot, \cdot):  H^1(\Omega)\times H^1(\Omega)\rightarrow \mathbb{R}$ as
\begin{equation}\label{equ:bilinear}
a(u,v) = (\nabla u, \nabla v),
\end{equation}
for any $u, v \in H^1(\Omega)$.    It is easy to see
that $|v|_{1, \Omega}^2 = a(v,v)$ and $|\cdot|_{1, \Omega}$ is a norm on $H_0^1(\Omega)$ by
the Poincar\'e inequality.

The variational formulation of \eqref{equ:model}  and \eqref{equ:bndcond} is to find  $u\in H^1_0(\Omega)$ such that
\begin{equation}\label{equ:vf}
a(u,  v) = (f, v), \quad \forall v\in H^1_0(\Omega).
\end{equation}
Lax-Milgram theorem implies it admits a unique solution.

\section{Virtual Element Method}
\label{sec:vem}

Let $\mathcal{T}_h$ be a partition of $\Omega$ into non-overlapping polygonal elements $E$ with non-self-intersecting polygonal boundaries.
Let $h_E$ be the diameter of element $E$ and $h = \max_{E\in\mathcal{T}_h} h_E$.  Throughout this paper, we assume that there exists $\rho \in (0,1)$ such that
 the mesh $\mathcal{T}_h$
satisfies the following two assumptions \cite{BGS2017, BBMR2016}:
\begin{itemize}
\item [(i).]  every element $E$ is star-shaped with respect to every point of a disk $D$ of radius $\rho h_E$;
\item [(ii).] every edge $e$ of $E$ has length $|e| \ge \rho h_E$.
\end{itemize}

In this paper,  we focus on  the lowest  order virtual element method as in \cite{AHMAD2013376, BBMR2014}.   To define the virtual element space, we begin with defining the
local virtual element spaces on each element. For such purpose,  let
\begin{equation}
\mathbb B(\partial E):=\{v\in C^0(\partial E):v|_e\in\mathbb P_1(e),\quad \forall e\in \partial E\}.
\end{equation}
Then, the local virtual element space $V(E)$ on the element $E$ can be defined as
\begin{equation}
V(E)=\{v\in H^1(E):v|_{\partial E}\in\mathbb B(\partial E),\quad\Delta v|_E \in \mathbb{P}_1(E) \}.
\end{equation}

The soul of virtual element methods is that the non-polynomial basis functions are never explicitly constructed  and needed.  This is made possible
by introducing the  projection operator $\Pi^{\nabla}$.  For any function $v_h \in V(E)$,  its projection  $\Pi^{\nabla}v_h$ is defined to satisfy the following
orthogonality:
\begin{equation}\label{equ:orthproj}
(\nabla p, \nabla (\Pi^{\nabla} v_h - v_h))_{ E} = 0, \quad \forall p\in \mathbb{P}_1(E),
\end{equation}
plus(to take care of the constant part of $\Pi^{\nabla}$):
\begin{equation}
\int_{\partial E}(\Pi^{\nabla} v_h-v_h)ds=0.
\end{equation}

The modified local virtual element space  \cite{AHMAD2013376} is defined as
\begin{equation}
W(E)=\{v_h\in V(E): (v_h- \Pi^{\nabla}v_h, q) = 0, \forall q\in \mathbb{P}_1(E)\}.
\end{equation}
Then,  the  virtual element space  \cite{AHMAD2013376, BBMR2014} is
\begin{equation}
V_h=\{v\in H^1(\Omega):v|_{ E}\in W(E), \quad\forall E\in \mathcal{T}_h\}.
\end{equation}
The degrees of freedom in $V_h$ are only the values of $v_h$ at all vertices.  Furthermore, let $V_{h,0} = V_h\cap H^1_0(\Omega)$
be the subspace of $V_h$ with homogeneous boundary condition.

Similarly, we can define the $L_2$ projection $\Pi^0$  as
\begin{equation}\label{equ:l2proj}
( p, \Pi^{0} v_h - v_h)_{ E} = 0, \quad \forall p\in \mathbb{P}_1(E).
\end{equation}
For the linear virtual element method\cite{AHMAD2013376, BBMR2014}, these two projections are equivalent, i.e. $\Pi^{\nabla} =\Pi^0$.
In the subsequent, we will make no  distinction between these two projections.

On each element $E\in \mathcal{T}_h$, we can define the following discrete bilinear form
\begin{align}
a^E_h(u_h,v_h)=(\nabla \Pi^{\nabla} u_h, \nabla \Pi^{\nabla} v_h)_E+S^E(u_h-\Pi^{\nabla} u_h, v_h-\Pi^{\nabla} v_h)
\end{align}
for any $ u_h,v_h\in V(E)$.  The discrete bilinear form $S^E$ is symmetric and positive, which is also fully computable using only
the degrees of freedom of $u_h$.    The readers are referred to \cite{AHMAD2013376,BBCMMR2013, BBMR2014} for the detail definition of $S^E$.

Then, we can define the  discrete bilinear form $a_h(\cdot, \cdot)$:
\begin{align}
a_h(u_h,v_h)=\sum_{E\in\mathcal{T}_h}a^E_h(u_h,v_h),
\end{align}
for any $ u_h,v_h\in V_h$.
The linear virtual element method for the model  problem \eqref{equ:model} is to  find $u_h \in V_{h, 0}$ such that
\begin{align}
a_h(u_h,v_h)=(f,\Pi^0 v_h),\quad \forall v_h \in V_{h,0}.
\end{align}


\section{Superconvergent Gradient Recovery}
\label{sec:sgr}
In this section, we present a high-accuracy and efficient post-processing technique for  the  virtual element
methods. Our idea is to generalize the polynomial preserving recovery \cite{ZN2005} to general polygonal meshes.
The generalized method works for a large class of numerical methods based on polygonal meshes including
mimetic finite difference methods\cite{Sh1996, BLM2014}, polygonal finite element methods\cite{ST2004},  and virtual element methods\cite{BBCMMR2013,BBMR2016}.
To illustrate the main idea, we take the  virtual element methods as an example to demonstrate the proposed algorithm.

We focus on the linear virtual element method.   Let $V_h$  be the linear virtual element space on a  general polygonal mesh $\mathcal{T}_h$ as defined in the previous section.
The sets of all vertices  and  of all edges of the polygonal mesh $\mathcal{T}_h$ are  denoted by $\mathcal{N}_h$ and $\mathcal{E}_h$, respectively.
Let $I_h$ be the index set of $\mathcal{N}_h$.

The proposed gradient recovery  is formed  in three steps: (1) construct  local patches of elements;
(2) conduct local recovery procedures; (3) formulate the recovered data in a global expression.

To  construct a local patch,  we first construct a union of mesh elements around a vertex.
For each vertex $z_i\in \mathcal{N}_h$ and nonnegative integer $n\in \mathbb{N}$,  define $\mathcal{L}(z_i,n)$ as
\begin{equation}
\mathcal{L}(z_i,n)=
\begin{cases}
z_i, & \text{if}\ n=0,\\
\bigcup\{E: E\in\mathcal{T}_h,\ E\cap\mathcal{L}(z_i,0)\neq\phi\}, &
\text{if}\ n=1,\\
\bigcup\{E: E\in\mathcal{T}_h,\ E\cap\mathcal{L}(z_i,n-1)\
\text{is a edge in } \mathcal{E}_h\}, & \text{if}\ n\ge 2.
\end{cases}
 \label{local}
\end{equation}

\begin{figure}[htp]
   \centering
   \subcaptionbox{\label{fig:0thlayer}}
  {\includegraphics[width=0.32\textwidth]{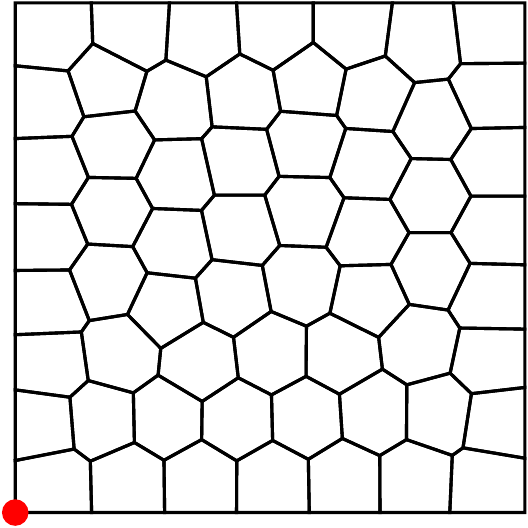}}
  \subcaptionbox{\label{fig:1thlayer}}
   {\includegraphics[width=0.32\textwidth]{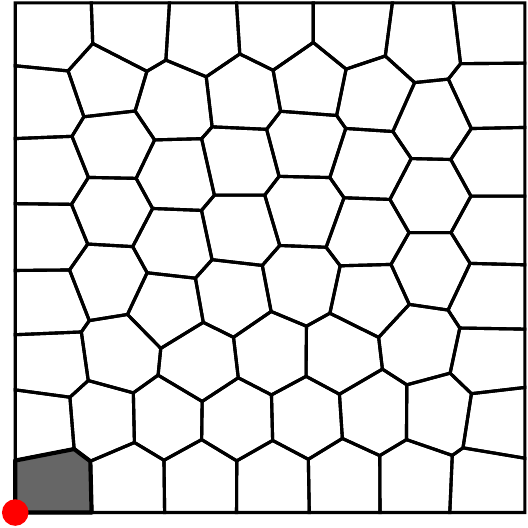}}
    \subcaptionbox{\label{fig:2thlayer}}
   {\includegraphics[width=0.32\textwidth]{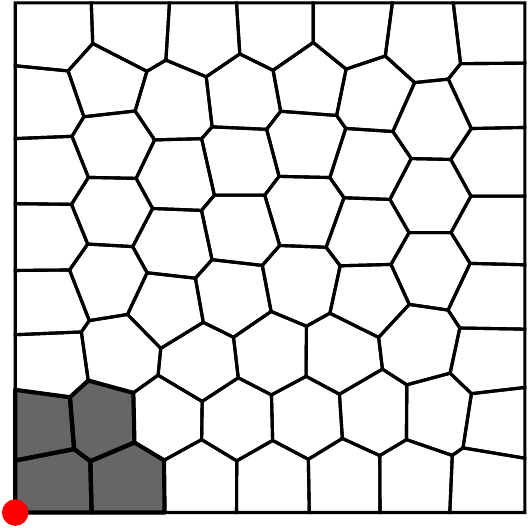}}
   \caption{Illustration of definition of $\mathcal{L}(z_i, n)$: (a) Plot of $\mathcal{L}(z_i, 0)$; (b) Plot of $\mathcal{L}(z_i, 1)$; (c) Plot of  $\mathcal{L}(z_i, 2)$.}
   \label{fig:localpatch}
\end{figure}

It is easy to see that  $\mathcal{L}(z_i, n)$ consists of the mesh elements in the first n layers around the vertex $z_i$.
In  Figure \ref{fig:localpatch}, we give an illustration of $\mathcal{L}(z_i, n)$ where $z_i$ is the red dotted point.
From the figure, we can clearly observe that $\mathcal{L}(z_i, 0)$ just contains the vertex $z_i$ itself and  $\mathcal{L}(z_i, 1)$ consists of
the elements which have $z_i$ as a vertex; while  $\mathcal{L}(z_i, 2)$ is the union of all elements in $\mathcal{L}(z_i,1)$ and their neighbourhood elements.

Let  $\Omega_{z_i}= \mathcal{L}(z_i, n_i)$  with $n_i$ be the smallest integer  such that $\mathcal{L}(z_i, n_i)$ satisfies the rank condition in the following sense:

\begin{definition}\label{def:rank}
 A local patch  $\Omega_{z_i}$ is said to satisfy the rank condition  if it admits
 a unique least-squares fitted polynomial $p_{z_i}$ in \eqref{equ:pprls}.
\end{definition}

\begin{remark}\label{rmk:pathc}
 For virtual element methods,  we are more interested in the case that $\mathcal{T}_h$ consists of polygons with more than four vertices.
 In general,  to guarantee the rank condition  we need  $n_i=1$ for interior vertices and $n_i=2$ for boundary vertices.
\end{remark}

\begin{remark}\label{rmk:bndpatch}
 For  boundary vertices, there are alternative  ways to construct the local patch satisfying the rank condition in Definition \ref{def:rank}.  The readers
 are referred to \cite{GZZZ2016}.
\end{remark}

To construct the recovered gradient at a given vertex $z_i$, let $B_{z_i}$ be the set of vertices
in $\Omega_{z_i}$ and $I_i$ be the indexes of the $B_{z_i}$.  Using the vertices in $B_{z_i}$ as
sampling points, we fit a quadratic polynomial  $p_{z_i}$ at the vertex $z_i$ in the following
least-squares sense: 
\begin{equation}\label{equ:pprls}
p_{z_i}(z)=\arg\min_{p\in \mathbb{P}_2(\Omega_{z_i})} \sum_{j\in I_i}| {p(z_{i_j})-u_{h,j}}|^2,
\end{equation}
 where $u_{h,j} = u_h(z_{i_j})$.

To avoid numerical instability in real numerical computation,  let $$h_i= \max\{|z_{i_k}-z_{i_j}|:
i_k, i_j\in I_i\},$$
and define the local coordinate transform
\begin{equation}
F: (x,y) \rightarrow (\xi, \eta) = \frac{(x,y)-(x_i,y_i)}{h_i},
\end{equation}
where $z=(x,y)$ and $\hat{z}=(\xi, \eta)$.
All the computations are performed at the reference local element patch
$\hat{\Omega}_{z_i} = F(\Omega_{z_i})$.
Then we can rewrite $p_{z_i}(z)$ as
\begin{equation}\label{equ:polydef}
p_{z_i}(z) =\bm{p}^T\bm{a} = \hat{\bm{p}}^T\hat{\bm{a}},
\end{equation}
where
\begin{align*}
&\bm{p}^T = (1, x, y, x^2, xy, y^2), \quad \hat{\bm{p}}^T = (1, \xi, \eta, \xi^2, \xi\eta, \eta^2), \\
&\bm{a} = (a_1, a_2, a_3, a_4, a_5, a_6), \quad \hat{\bm{a} }= (a_1, h_ia_2, h_ia_3, h_i^2a_4, h_i^2a_5, h_i^2a_6).
\end{align*}
Let $\hat{z}_{i_j} = F(z_{i_j})$. The coefficient $\hat{\bm{a}}$ is determined by solving  the linear system
\begin{equation}\label{equ:lsapp}
 (\hat{A}^T \hat{A}) \hat{\bm{a}} =   \hat{A}^T \bm{b},
\end{equation}
where
\begin{equation*}
\hat{A}=
\left(
\begin{matrix}
1 & \xi_{i_1} &\eta_{i_1} & \xi_{i_1} ^2&\xi_{i_1} \eta_{i_1} &\eta_{i_1} ^2 \\
1 & \xi_{i_2} &\eta_{i_2} & \xi_{i_2} ^2&\xi_{i_2} \eta_{i_2} &\eta_{i_2} ^2 \\
\vdots &\vdots& \vdots& \vdots& \vdots& \vdots\\
1 & \xi_{|I_i|} &\eta_{|I_i|} & \xi_{|I_i|}^2& \xi_{|I_i|}\eta_{|I_i|} &\eta_{|I_i|}^2 \\
\end{matrix}
\right)
 \text{ and }
\bm{ b}^T=
\left(
\begin{matrix}
(u_{h})_{i_1}\\
(u_{h})_{i_2}\\
\cdots\\
(u_{h})_{i_{|I_i|}}
\end{matrix}
\right)
\end{equation*}
with $|I_i|$ being the  cardinality of the set $I_i$.

\begin{remark}
 As observed in \cite{DG2017},   the least-squares fitting procedure will not improve the accuracy of the solution approximation.
 We can remove one degree of freedom in the least-squares fitting procedure by assuming
 \begin{align*}
 \tilde{p}_{z_i}(z) \ =\  &u_{h,i} + \tilde{a}_2(x-x_i) + \tilde{a}_3(y-y_i)+ \tilde{a}_4(x-x_i)^2 + \\
 &\tilde{a}_5(x-x_i)(y-y_i)+ \tilde{a}_6(y-y_i)^2.
 \end{align*}
 To determine $\tilde{\bm{a}} =  (\tilde{a}_2, \tilde{a}_3, \cdots, \tilde{a}_6)^T$, we only need to solve a $5\times 5$ linear system instead of $6\times 6$ linear system.
\end{remark}

Then the recovered gradient $G_hu_h$ at the vertex $z_i$ is defined as
\begin{equation}\label{equ:grdef}
G_{h} u_h(z_i)= \nabla  p_i(z_i) = \frac{1}{h_i} \left(
\begin{matrix}
\hat{a}_2\\
\hat{a}_3
\end{matrix}
\right).
\end{equation}
Once we obtain $G_{h} u_h(z_i)$ for each $i \in I_h$,  the global recovered gradient can be interpolated as
\begin{equation}
G_{h} u_h=  \sum_{i\in I_h} G_{h} u_h(z_i) \phi_i.
\end{equation}

\begin{algorithm}
\caption{Superconvergent Gradient Recovery Procedure}
\label{alg:gr}
\begin{minipage}{\textwidth}
Let polygonal mesh $\mathcal{T}_h$   and the data (VEM solution) $(u_{h,i})_{i\in I_h}$ be given.
Then repeat steps $(1)-(3)$ for all $i\in I_h$.
\begin{itemize}
\item[(1)]
For every $z_i$, construct a local patch of elements  $\Omega_{z_i}$. Let $B_{z_i}$ be the set of vertices
in $\Omega_{z_i}$ and $I_i$ be the indexes of the $B_{z_i}$.
\item[(2)] Construct reference local patch $\hat{\Omega}_{z_i}$ and  reference set of vertices $\hat{B}_{z_i}$.
\item[(3)] Find a polynomial $\hat{p}_{z_i}$ over $\hat{\Omega}_{z_i}$ by solving the least squares problem
\begin{equation*}
\hat{p}_{z_i}=\arg\min_{\hat{p}} \sum_{j\in I_i}| {\hat{p}(\hat{z}_{i_j})-u_{h,j}}|^2\; \text{ for } \hat{p}\in \mathbb{P}_2(\hat{\Omega}_{z_i}).
\end{equation*}

\item[(4)] Calculate the partial derivatives of the approximated polynomial functions, then we have the recovered gradient at each vertex $z_i$
\begin{equation*}
G_{h} u_h(z_i)= \nabla  p_{z_i}(z_i) = \frac{1}{h_i}\nabla  \hat{p}_{z_i}(0,0).
\end{equation*}
\end{itemize}
For the recovery of the gradient $G_{h} u_h$ on the whole domain $\Omega$, we propose to interpolate the values ${G_{h} u_h(z_i)}_{i\in I_h}$ by using the standard
linear interpolation of the virtual element method.
\end{minipage}
\end{algorithm}

The recovery procedure is summarized in Algorithm \ref{alg:gr}. From Algorithm 1, it can be clearly observed that to perform the gradient recovery procedure, we actually only use the information of degrees of freedom which
is the only information directly available from the linear virtual element method.

For the purpose of theoretical analysis, we can also  treat $G_h$ as an operator from $V_h$ to $V_h\times V_h$.
It is easy to see  that $G_h$ is a linear operator.

To show the consistency of the gradient recovery operator, we begin with the following theorem:

\begin{theorem}\label{thm:ppp}
If $u$ is a quadratic polynomial on $\Omega_{z_i}$, then $G_hu(z_i) = \nabla u(z_i)$ for each $i \in I_h$.
\end{theorem}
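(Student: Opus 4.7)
The plan is to invoke the uniqueness of the least-squares quadratic fit, which is precisely what the rank condition in Definition \ref{def:rank} guarantees, and then identify $p_{z_i}$ with $u$ outright whenever $u$ itself is quadratic.

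First I would extend $G_h$ to act on any function defined at the vertices by replacing the data $u_{h,j}$ in \eqref{equ:pprls} by the sampled nodal values $u(z_{i_j})$; this is the natural reading of $G_h u$ in the statement since the recovery procedure only sees vertex data. Then, given a quadratic $u\in\mathbb{P}^2(\Omega_{z_i})$, observe that $u$ itself lies in the admissible space, and with this choice of data it realizes the residual
\[
\sum_{j\in I_i} |u(z_{i_j})-u(z_{i_j})|^2 = 0,
\]
which is trivially the global minimum. Because $\Omega_{z_i}$ satisfies the rank condition, the minimizer of \eqref{equ:pprls} is unique, so we must have $p_{z_i}=u$ on $\Omega_{z_i}$. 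Taking gradients and evaluating at $z_i$ then yields $G_h u(z_i)=\nabla p_{z_i}(z_i)=\nabla u(z_i)$, as desired.

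The only subtlety to dispose of is that the algorithm is actually implemented in the rescaled coordinates $\hat z=(z-z_i)/h_i$ via $\hat p_{z_i}$. Quadratic polynomials pull back to quadratics under this affine map, and the $1/h_i$ prefactor in \eqref{equ:grdef} is exactly the chain-rule factor relating $\nabla_{\hat z}$ to $\nabla_z$. Hence the uniqueness argument carried out in the original or in the reference coordinates yield the same conclusion, and I do not anticipate a real obstacle: the theorem is a direct consequence of the uniqueness built into Definition \ref{def:rank}, together with the fact that the target space of the fit is exactly $\mathbb{P}^2$.
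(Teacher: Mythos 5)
Your proof is correct and follows essentially the same route as the paper: both arguments boil down to the observation that a quadratic $u$ reproduces its own nodal data exactly, so by the uniqueness guaranteed by the rank condition the least-squares fit must return $p_{z_i}=u$, whence $\nabla p_{z_i}(z_i)=\nabla u(z_i)$. The only cosmetic difference is that the paper verifies this monomial-by-monomial through the normal equations $A^TA\bm{a}=A^T\bm{b}$ (showing $\bm{e}_j$ is the unique solution when $u=q_j$), whereas you argue once for all quadratics via the zero-residual/variational characterization, which is a slightly more direct phrasing of the same idea.
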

\begin{proof}
By the definition of \eqref{equ:grdef}, we only need to  show
\begin{equation}\label{equ:identity}
\nabla p_{z_i}(z_i) = \nabla u(z_i)
\end{equation}
for all $u \in \mathbb{P}_2(\Omega_{z_i})$. To ease the presentation, we consider the least-squares fitting on the domain
$\Omega_{z_i}$ instead of the local reference domain $\hat{\Omega}_{z_i}$.  Suppose   $\{q_1(z), q_2(z), \cdots, q_6(z)\}$ is the monomial basis of
$ \mathbb{P}_2(\Omega_{z_i})$ and let $\bm{p}=( q_1(z), q_2(z), \cdots, q_6(z))$.   Then $p_{z_i}(z) = \bm{p}^T\bm{a}$ where $\bm{a}$ is determined
by the linear system
\begin{equation}\label{equ:adef}
A^TA\bm{a}=A^T\bm{b}.
\end{equation}

To prove the polynomial preserving property,  it is sufficient to show  the equation \eqref{equ:identity} is true
when $u = q_j(z)$ for $j = 1, 2, \cdots, 6$.
Let $u = q_j(z)$.  Then it implies that
\begin{equation}
\bm{ b}^T=
\left(
\begin{matrix}
(q_j)(z_{i_1})&
(q_j)(z_{i_2})&
\cdots&
(q_{j})(z_{i_{|I_i|}})
\end{matrix}
\right).
\end{equation}
It is easy to see that $A\bm{e}_j = \bm{b}$ where $\bm{e}_j$ is the $j$th canonical basis function in $\mathbb{R}^6$. Note that
$A^TA$ is nonsingular. Then  $\bm{e}_j$ is the unique  solution to the linear system \eqref{equ:lsapp}.  From \eqref{equ:polydef},  we can see
that $p_{z_i}(z) = \bm{p}(z)^T\bm{e}_j = q_j(z)$  and hence $\nabla p_{z_i}(z_i) = \nabla u(z_i)$.
Thus, for the quadratic polynomial $u$, we have $G_hu(z_i) = \nabla u(z_i)$.
\end{proof}

Theorem \ref{thm:ppp} means $G_h$ preserves quadratic polynomials at $z_i$.
Using the polynomial preserving property above, we can show the following Lemma:

\begin{lemma}
 Suppose $u_h\in V_h$,  then we have
 \begin{equation}\label{equ:bnd}
 |G_hu_h(z_i)|\lesssim h^{-1}|u_h|_{1,\Omega_{z_i}}.
 \end{equation}
\end{lemma}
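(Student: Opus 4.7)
The plan is to combine three ingredients: a least-squares stability estimate on the reference patch, the polynomial preserving property of Theorem \ref{thm:ppp} specialized to constants, and a VEM discrete-to-continuous norm equivalence closed by a Poincar\'e--Wirtinger inequality. I will carry them out in this order.

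First, I would work on the reference patch $\hat{\Omega}_{z_i}$, where the matrix $\hat{A}$ depends only on the shape of the patch and not on $h$. Under Definition \ref{def:rank} and standard shape regularity of the polygonal mesh, $\|(\hat{A}^T\hat{A})^{-1}\hat{A}^T\|_2$ is bounded by a constant independent of $h$. The normal equations $\hat{A}^T\hat{A}\hat{\bm{a}} = \hat{A}^T\bm{b}$, together with the definition \eqref{equ:grdef}, then give
\begin{equation*}
|G_h u_h(z_i)| \le \frac{1}{h_i}\|\hat{\bm{a}}\|_2 \lesssim \frac{1}{h_i}\|\bm{b}\|_2 = \frac{1}{h_i}\Big(\sum_{j\in I_i}|u_{h,j}|^2\Big)^{1/2}.
\end{equation*}
Applying Theorem \ref{thm:ppp} to the constant function $u\equiv c\in\mathbb{R}$ yields $G_h c(z_i)=0$, and since $G_h$ is linear the inequality above remains valid with $u_{h,j}$ replaced by $u_{h,j}-c$ for any constant $c$ of my choosing.

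Finally, to convert the $\ell^2$ norm of nodal values into an $L^2$ norm on $\Omega_{z_i}$, I would invoke the VEM analogue of the classical discrete/continuous equivalence
\begin{equation*}
h_E^2 \sum_{j\in I_E}|v_{h,j}|^2 \simeq \|v_h\|_{L^2(E)}^2 \qquad \forall\, v_h \in V(E),
\end{equation*}
summed over the polygons $E\subset \Omega_{z_i}$; this is obtained by scaling to a reference polygon of unit diameter and using unisolvence of the vertex degrees of freedom on the finite-dimensional space $V(E)$. Choosing $c$ to be the average of $u_h$ on $\Omega_{z_i}$ and applying the Poincar\'e--Wirtinger inequality on the connected patch (whose diameter is comparable to $h$) gives $\|u_h-c\|_{L^2(\Omega_{z_i})} \lesssim h|u_h|_{1,\Omega_{z_i}}$. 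Combining the three estimates and using $h_i\simeq h$ yields \eqref{equ:bnd}. I expect the discrete/continuous norm equivalence to be the main technical point, since virtual element functions have no explicit basis, and some care is needed to make the scaling constants uniform across all polygon shapes permitted by the mesh regularity.
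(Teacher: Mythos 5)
Your proposal is correct and shares the overall skeleton of the paper's argument --- uniform stability of the least-squares fit on the reference patch (the paper phrases this as the coefficients $\hat{c}_j^k$ being independent of the mesh size), subtraction of a constant via the polynomial-preserving property of Theorem \ref{thm:ppp}, and a scaled Poincar\'e inequality on the patch to close --- but the middle step, converting nodal values to a Sobolev norm, is genuinely different. The paper never invokes a discrete/continuous $L^2$ equivalence on $V(E)$: it telescopes $u_{h,i_j}-u_{h,i}$ along chains of edges, identifies each difference quotient with the tangential derivative of the (explicitly piecewise-linear) trace, and then uses a one-dimensional inverse inequality on each edge together with the trace inequality $\|u_h\|_{0,e}\lesssim h^{-1/2}\|u_h\|_{0,E}+h^{1/2}|u_h|_{1,E}$, arriving at $|G_hu_h(z_i)|\lesssim h^{-2}\|u_h-\bar{u}_h\|_{0,\Omega_{z_i}}+h^{-1}|u_h-\bar u_h|_{1,\Omega_{z_i}}$ before applying the scaled Poincar\'e--Friedrichs inequality of \cite{BGS2017}. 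Your route is shorter, but it outsources the real work to the equivalence $h_E^2\sum_{j}|v_{h,j}|^2\simeq\|v_h\|_{0,E}^2$ on $V(E)$; the direction you need (nodal $\ell^2$ controlled by $h_E^{-1}\|v_h\|_{0,E}$) is false for general $H^1$ functions and, even on $V(E)$, its uniformity over admissible polygon shapes is exactly the nontrivial point, since the space $V(\hat E)$ itself varies with the reference polygon and the norm-equivalence constant must be shown independent of it (this is essentially the content of the stability estimates in \cite{BGS2017}). A way to avoid that machinery is to observe that the vertex values see only the boundary trace, which is piecewise linear, so $\max_j|v_{h,j}|\lesssim h^{-1/2}\|v_h\|_{0,\partial E}$ by a 1D inverse inequality, and then the trace inequality into $E$ gives the bound with an extra $|v_h|_{1,E}$ term that is harmless for the target estimate --- which is precisely the paper's route stripped of the telescoping sum. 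So: correct, with the norm-equivalence lemma honestly flagged as the technical debt your version carries in place of the paper's edge-by-edge argument.
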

\vspace{-0.2in}
\begin{proof}
 According to \eqref{equ:lsapp} and \eqref{equ:grdef},  the recovered gradient $G_hu_h(z_j)$ can be expressed as
 \begin{equation}
G_hu_h(z_i)
=\left(
\begin{matrix}
G_h^xu_h(z_i) \\
G_h^yu_h(z_i)
\end{matrix}
\right)
= \frac{1}{h_i} \left(
\begin{matrix}
\sum\limits_{j=1}^{|I_i|}\hat{c}_j^1u_{h, i_j}\\
\sum\limits_{j=1}^{|I_i|}\hat{c}_j^2u_{h, i_j}
\end{matrix}
\right),
\end{equation}
where $\hat{c}_j^k$ is independent of the mesh size.  Setting $u \equiv u_{h,i}$ in  Theorem \ref{thm:ppp} yields
  \begin{equation}
G_hu_h(z_i)
=\left(
\begin{matrix}
0 \\
0
\end{matrix}
\right).
 \end{equation}
 Combining the above two equations, we have
   \begin{equation}
 G_hu_{h,i}
= \frac{1}{h_i} \left(
\begin{matrix}
\sum\limits_{j=1}^{|I_i|}\hat{c}_j^1(u_{h, i_j}-u_{h,i})\\
\sum\limits_{j=1}^{|I_i|}\hat{c}_j^2(u_{h, i_j}-u_{h,i})
\end{matrix}
\right).
\end{equation}
For any $z_{i_j}$, we can find $z_i = z_{j_1}, z_{j_2}, \cdots, z_{j_{n_j}}=z_{i_j}$ such that the line segment $\overline{z_{j_{\ell}}z_{j_{\ell+1}}}
= e_{j_{\ell}}$
is an edge of an element $E\in \Omega_{z_i}$.  Then we can rewrite $G^x_hu_h(z_i)$ as
\begin{equation}\label{equ:grx}
G^x_hu_h(z_i)=\sum\limits_{j=1}^{|I_i|}\hat{c}_j^1\sum_{\ell=1}^{n_{j}-1}\frac{(u_{h, j_{\ell+1}}-u_{h,j_{\ell}})}{h_i}.
\end{equation}
Note that $u_h$ is virtual element function. Then we have $u_h|_{e_{j_{\ell}}}$ is a linear polynomial and hence it holds that
\begin{equation}\label{equ:directionder}
\frac{u_{h, j_{\ell+1}}-u_{h,j_{\ell}}}{|e_{j_\ell}|} = \frac{\partial u_{h}}{\partial t_{j_{\ell}}}  \le |\nabla  u_h|_{0, \infty, e_{j_{\ell}}},
\end{equation}
where $t_{j_{\ell}}$ is the unit vector in the direction from $z_{j_{\ell}}$ to $z_{j_{\ell+1}}$.  Substituting \eqref{equ:directionder}
into \eqref{equ:grx} gives
\begin{equation}
|G^x_hu_h(z_i)|= \sum_{E\in \Omega_{z_i}}\sum_{e\in \mathcal{E}_E}\frac{|e|}{h_i}|\nabla u_h|_{0, \infty, e}.
\end{equation}
Since $u_h|_e$ is a linear polynomial, the inverse inequality \cite{BS2008, Ci2002} is applicable,  which implies
\begin{equation}
|\nabla u_h|_{0, \infty, e} \lesssim |e|^{-\frac{3}{2}}|| u_h||_{0,  e} .
\end{equation}
By the scaled trace inequality \cite{BGS2017}, we have
\begin{equation}
|| u_h||_{0,  e} \lesssim h^{-\frac{1}{2}}||u_h||_{0,  E}  +  h^{\frac{1}{2}}  | u_h|_{1,  E} .
\end{equation}
Combining the above estimates and noticing that $\frac{|e|}{h_i}$ is bounded by a fixed constant  using the assumption (ii) on the mesh $\mathcal{T}_h$, we have
\begin{align*}
|G^x_hu_h(z_i)|\lesssim&  \sum_{E\in \Omega_{z_i}}\left(h^{-2}||u_h||_{0,  E}  +  h^{-1} |u_h|_{1,  E}\right)\\
\lesssim &h^{-2}||u_h||_{0, \Omega_{z_i}}+h^{-1}|u_h|_{1,\Omega_{z_i}}.
\end{align*}
Let $\bar{u}_{h} = \frac{1}{|\Omega_{z_i}|}\int_{\Omega_{z_i}}u_hdz$.   Setting $u_h\equiv \bar{u}_{h}$ in Theorem \ref{thm:ppp} implies $G_h\bar{u}_{h}(z_i)=(0,0)^T$.
Replacing $u_h$ by $u_h- \bar{u}_{h} $ in the above estimate, we have
\begin{align*}
|G^x_hu_h(z_i)= & |G^x_h(u_h- \bar{u}_{h} )(z_i)|\\
\lesssim &h^{-2}||u_h- \bar{u}_{h} ||_{0, \Omega_{z_i}}+h^{-1}|u_h- \bar{u}_{h} |_{1,\Omega_{z_i}}\\
\lesssim & h^{-1}|u_h|_{1,\Omega_{z_i}},
\end{align*}
where we have used the scaled Poincar\'e-Freidrichs  inequality in \cite{BGS2017}.

Similarly, we can establish the same error bound for $G_h^yu_h(z_i)$. Thus, the estimate  \eqref{equ:bnd} is true.
\end{proof}

Based on the above lemma, we can establish the local boundedness in $L_2$ norm:
\begin{theorem}
 Suppose $u_h\in V_h$,  then for any $E\in \mathcal{T}_h$, we have
 \begin{equation}\label{equ:lcoalbnd}
 ||G_hu_h||_{0, E} \lesssim |u_h|_{1,\Omega_{E}},
 \end{equation}
where $\Omega_E = \bigcup\limits_{z\in E\cap\mathcal{N}_h}\Omega_{z}$.
\end{theorem}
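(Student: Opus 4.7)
The plan is to leverage the pointwise bound \eqref{equ:bnd} established in the previous lemma together with the explicit representation of $G_h u_h$ as a virtual element interpolant of its vertex values.

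First I would fix an element $E \in \mathcal{T}_h$ and use the representation $G_h u_h = \sum_{i \in I_h} G_h u_h(z_i)\phi_i$, where $\phi_i$ are the virtual element basis functions dual to the vertex degrees of freedom. Restricting to $E$, only the vertices $z_i \in E \cap \mathcal{N}_h$ contribute, giving
\begin{equation*}
G_h u_h|_E = \sum_{z_i \in E \cap \mathcal{N}_h} G_h u_h(z_i)\, \phi_i|_E.
\end{equation*}
Componentwise, each $\phi_i|_E$ is a virtual element function on $E$, i.e.\ harmonic in $E$ with continuous piecewise linear boundary values lying in $[0,1]$. By the maximum principle, $0 \le \phi_i \le 1$ on $E$, and since the $\phi_i$ form a partition of unity, I get the crude but sufficient bound $\|\phi_i\|_{0,E} \le |E|^{1/2} \lesssim h_E$ under the usual mesh regularity assumption that $|E| \lesssim h_E^2$.

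Next I would combine the triangle inequality with this $L^2$ bound on $\phi_i$ and with \eqref{equ:bnd} applied at each vertex of $E$:
\begin{equation*}
\|G_h u_h\|_{0,E} \le \sum_{z_i \in E\cap \mathcal{N}_h} |G_h u_h(z_i)|\, \|\phi_i\|_{0,E} \lesssim h_E \sum_{z_i \in E\cap \mathcal{N}_h} h^{-1} |u_h|_{1,\Omega_{z_i}}.
\end{equation*}
Since $h_E \lesssim h$ by mesh regularity, the factors of $h$ cancel, leaving a sum over the $O(1)$ vertices of $E$ of the seminorms $|u_h|_{1,\Omega_{z_i}}$. Each $\Omega_{z_i}$ with $z_i \in E$ is contained in $\Omega_E = \bigcup_{z\in E\cap \mathcal{N}_h}\Omega_z$, and the number of vertices of $E$ is uniformly bounded under the standard shape regularity assumption on polygonal meshes. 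Therefore $\sum_{z_i \in E\cap \mathcal{N}_h} |u_h|_{1,\Omega_{z_i}} \lesssim |u_h|_{1,\Omega_E}$, yielding \eqref{equ:lcoalbnd}.

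The main obstacle I anticipate is the control of $\|\phi_i\|_{0,E}$, since the virtual element basis functions are not explicitly known. The maximum principle plus partition-of-unity argument sketched above resolves this cleanly, but a more refined argument (e.g.\ invoking the scaled Poincar\'e--Friedrichs inequality of \cite{BGS2017} used in the previous lemma, or a standard $H^1$-stability estimate for the virtual element interpolant) would be needed if one wanted sharper constants or to relax the mesh regularity assumption controlling $|E|$ and the number of vertices per element.
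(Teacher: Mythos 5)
Your proposal is correct and follows essentially the same route as the paper: expand $G_hu_h|_E$ in the vertex basis, bound $\|\phi_i\|_{0,E}\lesssim |E|^{1/2}$, apply the pointwise bound \eqref{equ:bnd} at each vertex of $E$, and absorb the factor $|E|^{1/2}h^{-1}$ by mesh regularity. Your maximum-principle justification of $0\le\phi_i\le 1$ is a detail the paper leaves implicit, but the argument is the same.
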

\begin{proof}
Let $I_E$ be the index set of $E\cap\mathcal{N}_h$. Since $\{\phi_i\}_{i\in I_h}$ is the canonical basis for $V_h$,  we have
 \begin{equation*}
\begin{split}
  ||G_hu_h||_{0, E} \lesssim & |E|^{\frac{1}{2}}\sum_{j=1}^{|I_E|} |G_hu_h(z_{i_j})|\\
  \lesssim  &\sum_{j=1}^{|I_E|} |E|^{\frac{1}{2}}h^{-1}|u_h|_{1, \Omega_{z_{i_j}}}\\
   \lesssim & |u_h|_{1,\Omega_{E}},
\end{split}
\end{equation*}
where we have used the fact that $|E|^{\frac{1}{2}}h^{-1}$ is bounded by a fixed constant.
\end{proof}

As a direct consequence, we can prove the following corollary.
\begin{corollary}\label{cor:bdp}
 Suppose $u_h\in V_h$,  then we have
 \begin{equation}\label{equ:bdp}
 ||G_hu_h||_{0, \Omega} \lesssim |u_h|_{1,\Omega}.
 \end{equation}
\end{corollary}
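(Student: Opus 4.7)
The plan is to deduce the global $L^2$ bound on $G_h u_h$ from the local bound in the preceding theorem by summing over elements and exploiting finite overlap of the patches. More precisely, I would square the local inequality $\|G_h u_h\|_{0,E}^2 \lesssim |u_h|_{1,\Omega_E}^2$ and sum over all $E \in \mathcal{T}_h$.

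The key observation is that for each element $E$, the enlarged patch $\Omega_E = \bigcup_{z \in E \cap \mathcal{N}_h}\Omega_z$ is a union of finitely many vertex patches $\Omega_z$, each of which consists of one or two layers of elements around a vertex (as noted in Remark on patches). Under standard shape-regularity assumptions on the polygonal mesh, the number of elements in any $\Omega_z$ is bounded by a fixed constant, and conversely, any given element $E' \in \mathcal{T}_h$ belongs to $\Omega_E$ for only a bounded number of $E$. Thus the sum $\sum_E |u_h|_{1,\Omega_E}^2$ reduces to a constant multiple of $|u_h|_{1,\Omega}^2$.

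Putting these together:
\begin{equation*}
\|G_h u_h\|_{0,\Omega}^2 = \sum_{E \in \mathcal{T}_h} \|G_h u_h\|_{0,E}^2 \lesssim \sum_{E \in \mathcal{T}_h} |u_h|_{1,\Omega_E}^2 \lesssim |u_h|_{1,\Omega}^2,
\end{equation*}
and taking square roots yields \eqref{equ:bdp}.

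There is no real obstacle here beyond invoking the finite-overlap property of the vertex patches, which is implicit in the mesh regularity assumptions standing in the paper. The main technical content is entirely packaged in the local estimate \eqref{equ:lcoalbnd} already proved; this corollary is the routine globalization step.
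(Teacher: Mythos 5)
Your argument is correct and is exactly the globalization the paper has in mind: the paper states the corollary as a direct consequence of the local bound \eqref{equ:lcoalbnd} without writing out the summation, and your squaring, summing over elements, and invoking the finite overlap of the patches $\Omega_E$ under mesh regularity is the standard way to fill in that step.
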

Corollary \ref{cor:bdp} implies that $G_h$ is a linear bounded operator from $V_h$ to $V_h\times V_h$.
Now, we are in the perfect position to  present the consistency result of $G_h$.

\begin{theorem}\label{thm:consist}
Suppose $u\in H^3(\Omega_{E})$, then we have
$$
\|G_hu-\nabla u\|_{0,E}\lesssim h^2\|u\|_{3,\Omega_{E}}.
$$
\end{theorem}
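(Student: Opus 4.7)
The plan is to combine the polynomial-preserving identity of Theorem \ref{thm:ppp} with a Bramble--Hilbert type approximation argument on the patch $\Omega_E$. I would start by choosing a quadratic polynomial $p\in\mathbb{P}_2(\Omega_E)$ that approximates $u$ to third order, e.g. an averaged Taylor polynomial of order three, which satisfies $|u-p|_{k,\Omega_E}\lesssim h^{3-k}|u|_{3,\Omega_E}$ for $k=0,1,2$. Then I would split
\begin{equation*}
G_h u-\nabla u \;=\; G_h(u-p) \;+\; (G_h p-\nabla p) \;+\; (\nabla p-\nabla u).
\end{equation*}

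The middle term is easy to kill. For each vertex $z_i\in E\cap\mathcal{N}_h$ the local patch $\Omega_{z_i}$ lies inside $\Omega_E$, so $p$ is still quadratic on $\Omega_{z_i}$ and Theorem \ref{thm:ppp} gives $G_h p(z_i)=\nabla p(z_i)$. Since $\nabla p$ is a linear vector field on $\Omega_E$ and linear functions lie in $V(E)$, the virtual element interpolation used to globalize $G_h p$ reproduces $\nabla p|_E$ exactly, so $G_h p-\nabla p\equiv 0$ on $E$. The third term is controlled directly by Bramble--Hilbert, $\|\nabla p-\nabla u\|_{0,E}\leq |u-p|_{1,\Omega_E}\lesssim h^2|u|_{3,\Omega_E}$.

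The main obstacle is the first term $\|G_h(u-p)\|_{0,E}$: the boundedness estimate in Corollary \ref{cor:bdp} was proved for genuine virtual element functions and relies on edge-linearity, whereas $\phi:=u-p$ is merely smooth. To sidestep this I would return to the explicit formula \eqref{equ:grdef}. The rank condition of Definition \ref{def:rank}, combined with the rescaling to the reference patch $\hat\Omega_{z_i}$, ensures that the normal-equation matrix $(\hat A^T\hat A)^{-1}$ is uniformly bounded in $h$, so $G_h\phi(z_i)$ is $h_i^{-1}$ times a bounded linear combination of the vertex samples $\phi(z_{i_j})$. This yields the pointwise estimate $|G_h\phi(z_i)|\lesssim h^{-1}\|\phi\|_{0,\infty,\Omega_{z_i}}$, and because $G_h\phi|_E$ is a VEM interpolant whose basis functions are uniformly bounded by the maximum principle while $|E|\lesssim h^2$, it promotes to $\|G_h\phi\|_{0,E}\lesssim\|\phi\|_{0,\infty,\Omega_E}$.

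It then remains to prove $\|u-p\|_{0,\infty,\Omega_E}\lesssim h^2|u|_{3,\Omega_E}$, which I would obtain from the two-dimensional Sobolev embedding $H^2\hookrightarrow C^0$. Scaling $\Omega_E$ to a unit-diameter reference patch produces $\|\phi\|_{0,\infty,\Omega_E}^2\lesssim\sum_{k=0}^{2}h^{2k-2}|\phi|_{k,\Omega_E}^2$, and each summand is at most $h^{4}|u|_{3,\Omega_E}^2$ once the Bramble--Hilbert bounds on $|u-p|_{k,\Omega_E}$ are inserted. Assembling the three pieces and using $|u|_{3,\Omega_E}\leq\|u\|_{3,\Omega_E}$ closes the estimate.
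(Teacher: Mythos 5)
Your proposal is correct, and it arrives at the result by a genuinely different (and in one respect more careful) route than the paper. The paper's proof is the abstract one-liner: it defines the sublinear functional $\mathcal{F}(u)=\|G_hu-\nabla u\|_{0,E}$, invokes the boundedness $\mathcal{F}(u)\lesssim|u|_{1,\Omega_E}$ and the annihilation of $\mathbb{P}_2(\Omega_E)$, and concludes by the Bramble--Hilbert lemma. You instead unroll that argument: you construct the averaged Taylor polynomial explicitly, split into the three terms $G_h(u-p)$, $G_hp-\nabla p$, and $\nabla p-\nabla u$, and estimate each one. The payoff of your version is precisely the point you flag yourself: the boundedness results in the paper (the lemma and Corollary \ref{cor:bdp}) are proved only for $u_h\in V_h$ and exploit edge-linearity, so quoting $\mathcal{F}(u)\lesssim|u|_{1,\Omega_E}$ for a general $u\in H^3$ is not literally justified --- indeed $G_h$ samples point values, which are not even defined for $H^1$ functions in two dimensions. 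Your replacement, namely the $\ell^\infty$ bound $|G_h\phi(z_i)|\lesssim h^{-1}\|\phi\|_{0,\infty,\Omega_{z_i}}$ obtained from the normal equations together with the scaled embedding $H^2\hookrightarrow C^0$, repairs this and is the standard fix in the PPR literature. You also correctly supply the step the paper skips between Theorem \ref{thm:ppp} (equality at vertices) and the identity $G_hp\equiv\nabla p$ on $E$, via reproduction of linear fields by the virtual element interpolant. The only caveats, shared with the paper, are that the uniform invertibility of $\hat A^T\hat A$ requires a quantitative version of the rank condition under mesh regularity, and that the Bramble--Hilbert and embedding constants on $\Omega_E$ are uniform only under standard shape-regularity of the patches; with those assumptions made explicit, your argument is complete.
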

\vspace{-0.2in}
\begin{proof}
Define $$\mathcal{F}(u)=\|G_hu-\nabla u\|_{0,E}.$$
By the boundedness of $G_h$, it is easy to see that
\begin{equation*}
\begin{split}
 \mathcal{F}(u) \le& \|G_hu\|_{0, E} + \|\nabla u\|_{0,E}\\
 \lesssim &|u|_{1, \Omega_E}.
\end{split}
\end{equation*}
The polynomial property of the gradient recovery operator $G_h$ implies $G_hp=\nabla p$ for any $p\in\mathbb{P}_2(\Omega_E)$. Thus we have
$$\mathcal{F}(u+p)=\mathcal{F}(u).$$
By the Brambler-Hilbert Lemma\cite{BS2008, Ci2002}, we obtain
  \begin{displaymath}
\mathcal{F}(u)\lesssim h^2\|u\|_{3,\Omega_E}.   \end{displaymath}
\end{proof}

Theorem \ref{thm:consist} implies the gradient recovery operator is consistent in the sense that the recovered gradient using the exact solution is superconvergent to
the exact gradient at   a rate of $\mathcal{O}(h^2)$.

\section{Adaptive Virtual Element Method}
\label{sec:avem}
The adaptive virtual element method is summarized as a loop of the following steps:
\begin{equation}
\bf
Solve \rightarrow Estimate\rightarrow Mark \rightarrow Refine
\end{equation}

Starting from an initial polygonal  mesh, we solve the equation by using the linear virtual element method. Once the virtual element solution is available, we need to  design a fully computational
{\it a posteriori} error estimator using only the virtual element solution.  This step is vital for the adaptive virtual element
method because it determines the performance of the adaptive algorithm.  In this paper, we introduce  a recovery-based {\it a posteriori}
error estimator using the proposed gradient recovery method, which we elaborate  in the coming subsection.

\subsection{Recovery-based {\it a posteriori} error estimator}
\label{ssec:estimator}

Provided that the recovered gradient is reconstructed, we are ready to present the recovery-type {\it a posteriori } error estimator for virtual element methods.
However, for virtual element methods,  their basis functions are not explicitly constructed which means that both $G_hu_h$ and $\nabla u_h$ are not  computable quantities.
To overcome this difficulty, we propose to use $\Pi^0_{E}G_hu_h$ and $\nabla\Pi^0_{E} u_h$, which are computable.
We define a local {\it a posteriori} error estimator on each  polygonal element E as
\begin{equation}\label{equ:localind}
\eta_{h,E} =
\|\Pi^0_{E}G_hu_h - \nabla  \Pi^0_{E}u_h\|_{0, E},
\end{equation}
and the corresponding global error estimator  as
\begin{equation}\label{equ:globalind}
\eta_h = \left( \sum\limits_{E\in \mathcal{T}_h}\eta_{h,E}^2\right)^{1/2}.
\end{equation}

To measure the performance of  the {\it a posteriori} error estimator  \eqref{equ:localind} or \eqref{equ:globalind}, we introduce
the effective index
\begin{equation}\label{equ:effidx}
\kappa_h = \frac{\|\Pi^0_{E}G_hu_h -\nabla\Pi^0_{E}  u_h\|_{0, \Omega}}
{\|\nabla u  -  \nabla \Pi^0u_h\|_{0, \Omega}},
\end{equation}
which is computable when the exact solution $u$ is provided.

For {\it a posteriori} error estimators,  the ideal case we expect is the so-called asymptotic exactness.
\begin{definition}\label{def:exact}
The {\it a posteriori} error  estimator \eqref{equ:localind} or \eqref{equ:globalind} is said to be  asymptotically exact if
\begin{equation}
\lim_{h\rightarrow 0} \kappa_h = 1.
\end{equation}
\end{definition}
A series of benchmark numerical examples in the next section indicate the recovery-based {\it a posteriori} error estimator \eqref{equ:localind} or \eqref{equ:globalind}
is asymptotically exact for the linear virtual element method, which distinguishes it from the residual-type {\it a posteriori} error
estimators for virtual element methods in the literature \cite{CGPS2017, BM2015,BB2017}.

\subsection{Marking strategy}
Once the recovery-type {\it a posteriori} error estimator  $\eqref{equ:localind}$ is available, we pick up a set of elements to be refined. This process is called marking.
There are several different marking strategies.  In this paper, we adopt the bulk marking strategy proposed by D{\"o}rfler \cite{Do1996}.
 Given  a constant $\theta\in (0, 1]$, the bulk strategy is to find $\mathcal{M}_h\subset \mathcal{T}_h$ such  that
 \begin{equation}
\left( \sum_{E\in \mathcal{M}_h} \eta_{h,E}^2\right)^{\frac{1}{2}} \le \theta \left( \sum_{E\in \mathcal{T}_h} \eta_{h,E}^2\right)^{\frac{1}{2}}.
\end{equation}
In general, the choice of $\mathcal{M}_h$ is not unique.  We select $\mathcal{M}_h$ such that the cardinality of $\mathcal{M}_h$ is smallest.

\subsection{Adaptive mesh refinement}
  One of the main advantages of virtual element methods is their flexibility in local mesh refinement. Virtual element methods
allow that elements  can have an arbitrary number of edges and two  edges  can be collinear. These advantages enable virtual element
methods  to naturally handle  hanging nodes.  A polygon with a hanging node is just a polygon  that has an extra edge collinear with another edge.
It avoids artificial refinement of the unmarked neighborhood in the classical adaptive finite element methods.
Take  the polygon in Figure \ref{fig:polygon} as example.  It is a  pentagon with five vertices $V_1, V_2, \cdots, V_5$.
But there are three hanging nodes $V_6, V_7, V_8$ which are generated by the refinement of its neighborhood element.
In the virtual element method, we can treat the pentagon with three hanging nodes as an octagon with eight vertices
$V_1, V_2, \cdots, V_8$.   Note that  in the octagon, there are four edges are collinear  which is allowed in the
virtual element method.

In the paper, we adopt the same way to refine a polygon as in \cite{CGPS2017, AFMD2018}.
We divide a polygon into several sub-polygons  by connecting its barycenter
to each planar edge center.  Note that two or more edges collinear to each other are treated as one planar edge.
We take the polygon in Figure \ref{fig:polygon} as an example again.   The refinement of the polygonal is
illustrated in Figure \ref{fig:polygon} by the dashed lines.   Note that the four edges collinear to each other
are viewed as one edge $\overline{V_5V_1}$.  Thus, in the refinement, we bisect $\overline{V_5V_1}$ instead of the four collinear edges.

\begin{figure}
   \centering
  {\includegraphics[width=0.6\textwidth]{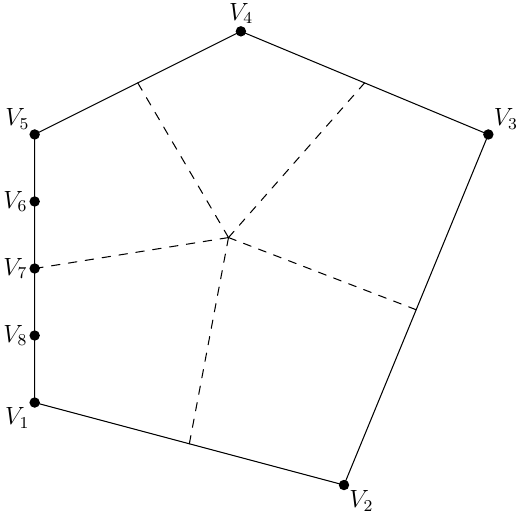}}
    \caption{Illustration of handling hanging nodes in the virtual element method  and local refinement of   polygonals with collinear edges}\label{fig:polygon}
\end{figure}

\section{Numerical Results}
\label{sec:ne}
In this section,  we present several numerical examples to demonstrate our numerical discoveries.
The first example is to illustrate the superconvergence of the proposed gradient recovery.
The other examples are to numerically  validate the asymptotic exactness of the
recovery-based {\it a posteriori } error estimator.

In the virtual element method,  the basis functions are never explicitly constructed and  the numerical solution is unknown inside elements.
In the computational test, we shall use the  projection $\Pi_h^0u_h$ to compute different errors instead of using $u_h$.
In addition,  all the convergence rates are illustrated in term of the degrees of freedom (DOF). In two dimensional cases,
DOF $\approx h^2$ and the corresponding convergence rates in term of the mesh size $h$ are doubled of what we plot in the graphs.

\subsection{Test case 1: smooth problem}   In this example, we consider the following homogeneous elliptic equation
\begin{equation}
-\Delta u = 2\pi^2\sin(\pi x)\sin(\pi y), \quad \text{ in  } \Omega = (0,1) \times (0,1).
\end{equation}
The exact solution is $u(x,y) = \sin(\pi x)\sin(\pi y)$.

\begin{figure}
   \centering
   \subcaptionbox{$\mathcal{T}_{h,1}$\label{fig:quad_mesh}}
  {\includegraphics[width=0.32\textwidth]{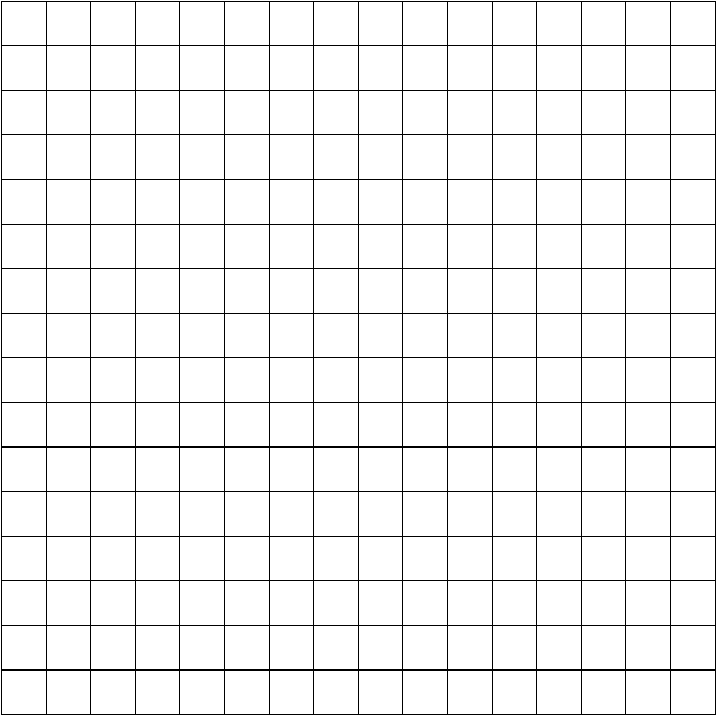}}
  \subcaptionbox{$\mathcal{T}_{h,2}$\label{fig:hexagonal_mesh}}
   {\includegraphics[width=0.32\textwidth]{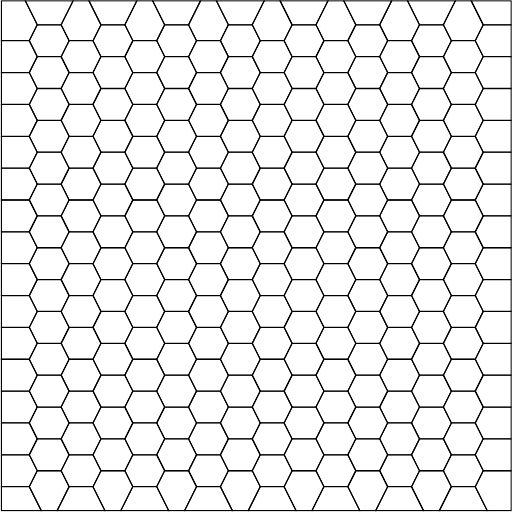}}
  \subcaptionbox{$\mathcal{T}_{h,3}$\label{fig:nonconvex_mesh}}
  {\includegraphics[width=0.32\textwidth]{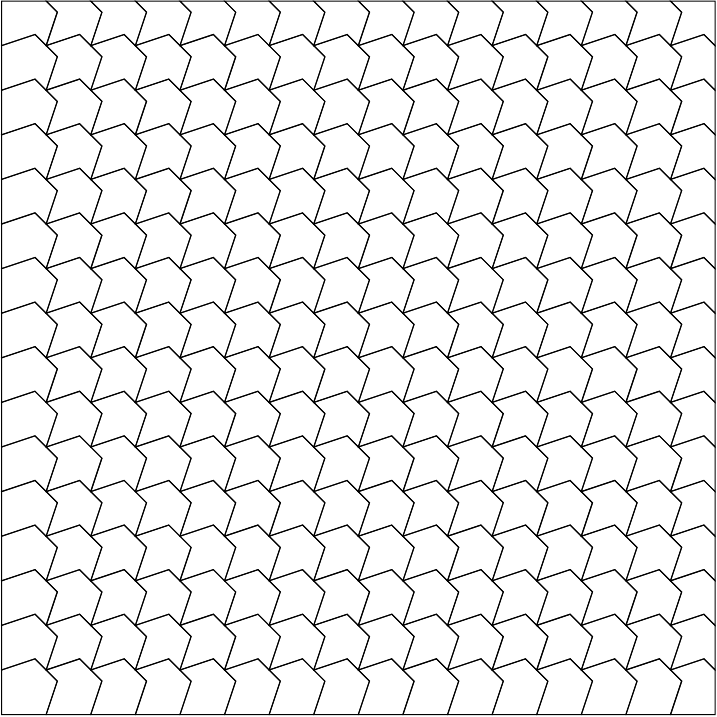}}\\
   \subcaptionbox{$\mathcal{T}_{h,4}$\label{fig:randquad_mesh}}
  {\includegraphics[width=0.32\textwidth]{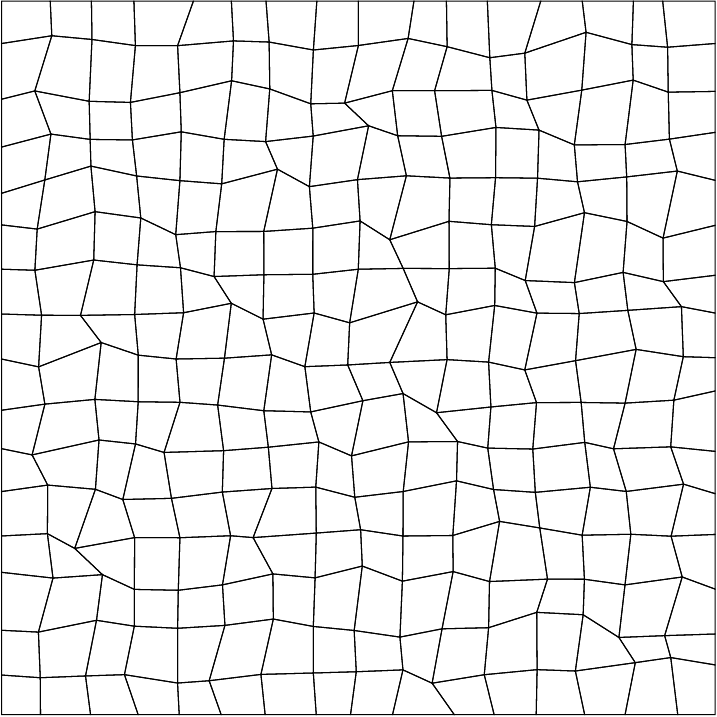}}
  \subcaptionbox{$\mathcal{T}_{h,5}$\label{fig:brezzi_mesh}}
   {\includegraphics[width=0.32\textwidth]{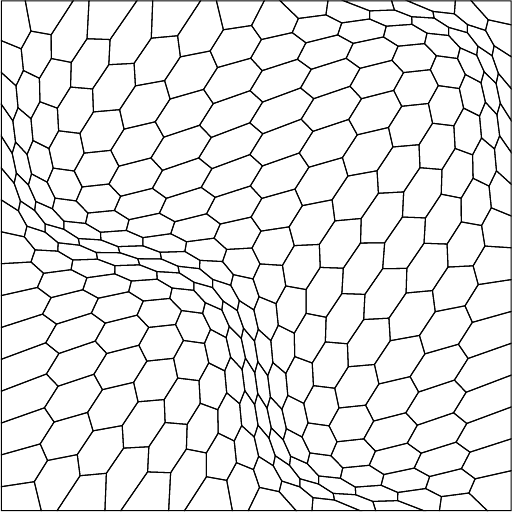}}
  \subcaptionbox{$\mathcal{T}_{h,6}$\label{fig:voronoi_mesh}}
  {\includegraphics[width=0.32\textwidth]{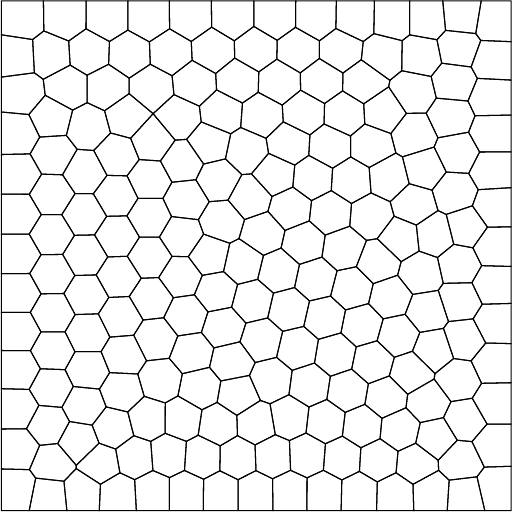}}
   \caption{Sample meshes for numerical tests: (a)  uniform quadrilateral mesh; (b) structured hexagonal mesh;
   (c)  concave mesh; (d)  general quadrilateral mesh; (e)  hexagonal mesh ;
   (f)  Voronoi mesh.}\label{fig:mesh}
\end{figure}


In this test,   we adopt six different types of  meshes  to numerically show the superconvergence of the proposed gradient recovery method.
The first level of each type of meshes  are plotted in  Figure \ref{fig:mesh}.   The first type  of mesh $\mathcal{T}_{h,1}$  is just the uniform square  mesh.
The second type  of mesh $\mathcal{T}_{h,2}$ is uniform hexagonal mesh.
The third type  of mesh $\mathcal{T}_{h,3}$ is uniform non-convex  mesh.
$\mathcal{T}_{h,4}$ is generated by adding random perturbation to the  mesh $\mathcal{T}_{h,1}$.
The fifth type of mesh $\mathcal{T}_{h,5}$ is generated by applying  the following coordinate transform
\begin{equation*}
\begin{split}
 x= \hat{x}+\frac{1}{10}\sin(2\pi \hat{x})\sin(2\pi \hat{y}),\\
 y = \hat{y}+\frac{1}{10}\sin(2\pi \hat{x})\sin(2\pi \hat{y});\\
\end{split}
\end{equation*}
to the uniform hexagonal  mesh $\mathcal{T}_{h,2}$.  The sixth type of mesh $\mathcal{T}_{h,6}$ is smoothed Voronoi mesh generated by Polymesher\cite{TPPM2012}.

In addition to the discrete $H_1$ semi-error $\|\nabla u - \nabla \Pi^0_h u_h\|_{0, \Omega}$  and the recovered error $\|\nabla u -\Pi^0_h G_hu_h\|_{0, \Omega}$,
we also consider the error $\|\nabla u_h - \nabla u_I\|_{0, \Omega} $.   In this paper, we approximate  $\|\nabla u_h - \nabla u_I\|_{0, \Omega} $ by a computable  quantity $ \sqrt{(\bm{u_h}-\bm{u_I})^TA_h(\bm{u_h}-\bm{u_I})}$
where $A_h$ is the stiffness matrix,   $u_I$ is the interpolation of $u$ into the virtual element space $V_h$, and $\bm{u_h} $ (or $\bm{u_I}$)
is a vector of value of  $u_h$ (or $u_I$) on the degrees of freedom.  The error  $\|\nabla u_h - \nabla u_I\|_{0, \Omega}$ plays an important role in the study of superconvergence
for gradient recovery methods in the classical  finite element methods \cite{BX2003, XZ2004}.  We say the gradient of the numerical solution is
superclose to the gradient of the interpolation of the exact solution if $\|\nabla u_h - \nabla u_I\|_{0, \Omega}\lesssim \mathcal{O}(h^{1+\rho})$
for some $0<\rho\le 1$.  The supercloseness result is a sufficient but not necessary condition to prove the superconvergence of gradient recovery methods \cite{BX2003, XZ2004, ZN2005, GZ2015}.

We plot the rates of convergence for the above three different errors in Figure \ref{fig:err}.  As predicted in \cite{BBCMMR2013}, the discrete $H_1$ semi-error
decays at  a rate of $\mathcal{O}(h)$ for all the above six different types of meshes. Concerning the the error  $\|\nabla u_h - \nabla u_I\|_{0, \Omega}$,
we can only  observe $\mathcal{O}(h^2)$ supercloseness on two structured convex meshes and the transformed meshes $\mathcal{T}_{h,5}$.
It is not surprising since the supercloseness depends strongly on the symmetry of meshes even on triangular meshes, see \cite{BX2003, XZ2004}.
But the recovered gradient is superconvergent to the exact gradient at  a rate of $\mathcal{O}(h^2)$ on all the above meshes including  meshes with non-convex elements.   The above numerical observation is summarized in Table \ref{tab:error}.

\begin{figure}
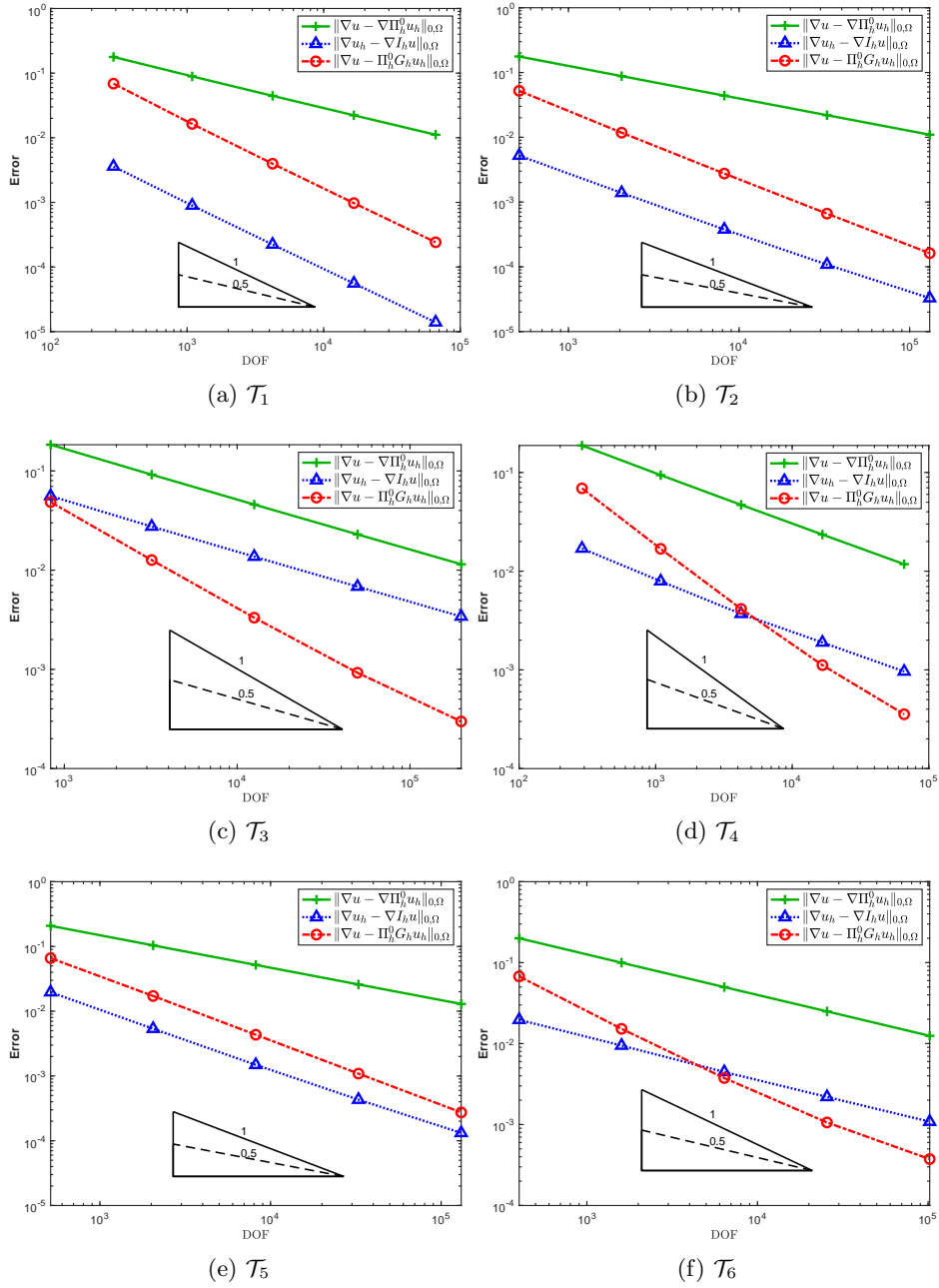

   \centering
   \subcaptionbox{$\mathcal{T}_1$\label{fig:quad_err}}
  {\includegraphics[width=0.47\textwidth]{quad_error}}
  \subcaptionbox{$\mathcal{T}_2$\label{fig:hexagonal_err}}
   {\includegraphics[width=0.47\textwidth]{hexagonal_error}}\\
  \subcaptionbox{$\mathcal{T}_3$\label{fig:nonconvex_err}}
  {\includegraphics[width=0.47\textwidth]{nonconvex_error}}
   \subcaptionbox{$\mathcal{T}_4$\label{fig:randquad_err}}
  {\includegraphics[width=0.47\textwidth]{randquad_error}}\\
  \subcaptionbox{$\mathcal{T}_5$\label{fig:brezzi_err}}
   {\includegraphics[width=0.47\textwidth]{brezzi_error}}
  \subcaptionbox{$\mathcal{T}_6$\label{fig:voronoi_err}}
  {\includegraphics[width=0.47\textwidth]{voronoi_error}}
   \caption{Sample errors for numerical tests: (a) on structured quadrilateral mesh; (b) on  structured hexagonal mesh;
   (c) on structure concave mesh; (d) on unstructured quadrilateral mesh; (e) on unstructured hexagonal mesh;
   (f) on unstructured Voronoi mesh.}\label{fig:err} 
\end{figure}

\begin{table}[h!]
  \begin{center}
    \caption{Summary of numerical results on the  six different types of  meshes}
    \label{tab:error}
    \begin{tabular}{|c|c|c|c|c|c|} \hline
      Mesh  Type& $\|\nabla u  - \Pi^0 \nabla u_h\|_{0, \Omega}$ & $\|\nabla u_h - \nabla u_I\|_{0, \Omega}$
      & $\|\nabla u - \Pi^0_{E} G_hu_h\|_{0, \Omega}$\\
      \hline
      $\mathcal{T}_{h,1}$& $\mathcal{O}(h)$ & $\mathcal{O}(h^2)$& $\mathcal{O}(h^2)$\\ \hline
            $\mathcal{T}_{h,2}$& $\mathcal{O}(h)$& $\mathcal{O}(h^2)$& $\mathcal{O}(h^2)$\\ \hline
      $\mathcal{T}_{h,3}$& $\mathcal{O}(h)$& $\mathcal{O}(h)$& $\mathcal{O}(h^2)$ \\ \hline
      $\mathcal{T}_{h,4}$& $\mathcal{O}(h)$& $\mathcal{O}(h)$&$\mathcal{O}(h^2)$  \\ \hline
      $\mathcal{T}_{h,5}$& $\mathcal{O}(h)$&$\mathcal{O}(h^2)$& $\mathcal{O}(h^2)$ \\ \hline
      $\mathcal{T}_{h,6}$& $\mathcal{O}(h)$& $\mathcal{O}(h)$ & $\mathcal{O}(h^2)$ \\ \hline
    \end{tabular}
  \end{center}
\end{table}

\subsection{Test case 2: L-shaped domain problem}  In this example, we consider the Laplace equation
\begin{equation*}
-\Delta u = 0,
\end{equation*}
on the L-shaped domain  $\Omega= [-1, 1] \times [-1, 1] \backslash (0, 1) \times (-1, 0)$. The exact solution
is $u = r^{2/3}\sin(2\theta/3)$ in polar coordinate.
The corresponding  boundary condition  is  computed from the exact solution $u$.
Note the exact solution $u$  has a singularity at the origin.

\begin{figure}
   \centering
   \subcaptionbox{\label{fig:lshape_init}}
  {\includegraphics[width=0.47\textwidth]{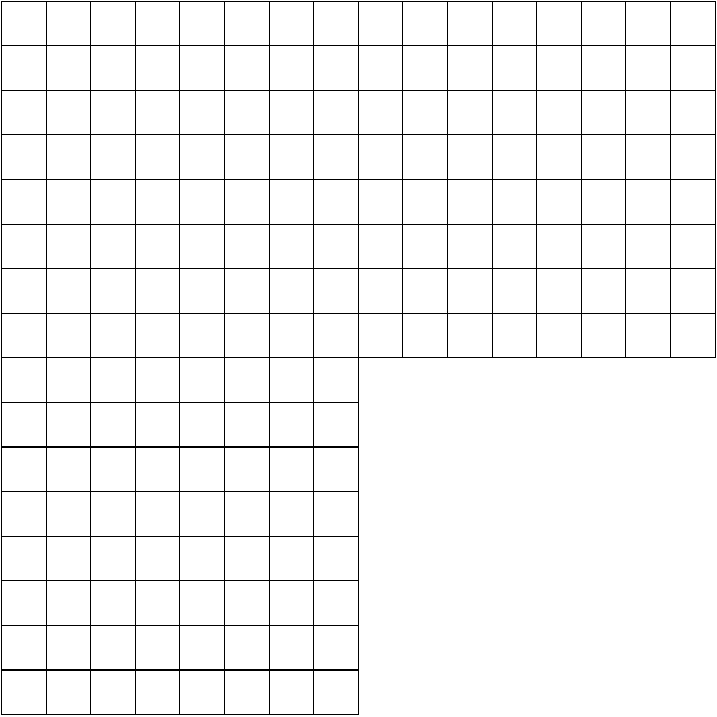}}
  \subcaptionbox{\label{fig:lshape_adaptive}}
   {\includegraphics[width=0.47\textwidth]{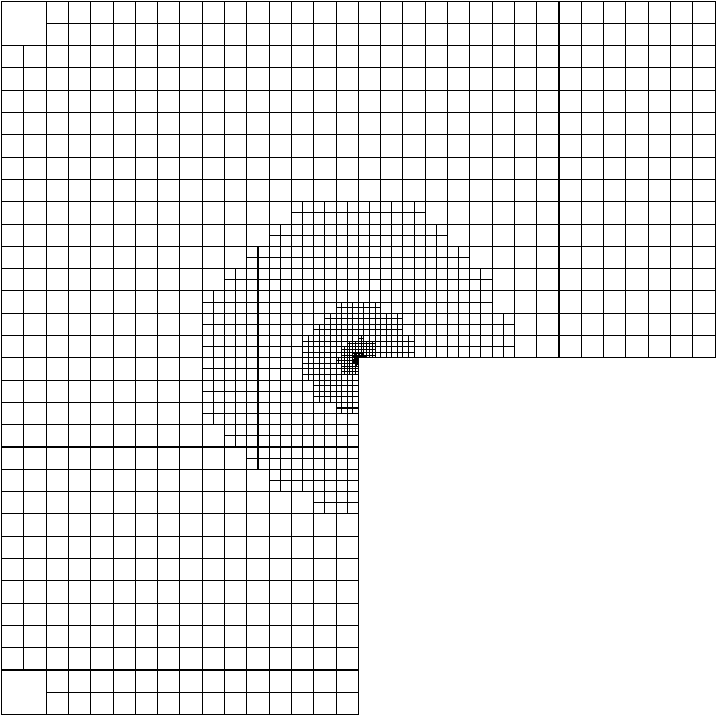}}
   \caption{Meshes for test case 2: (a) Initial mesh; (b) Adaptively refined mesh.}
   \label{fig:lshape_mesh}
\end{figure}

\begin{figure}
   \centering
   \subcaptionbox{\label{fig:lshape_error}}
  {\includegraphics[width=0.48\textwidth]{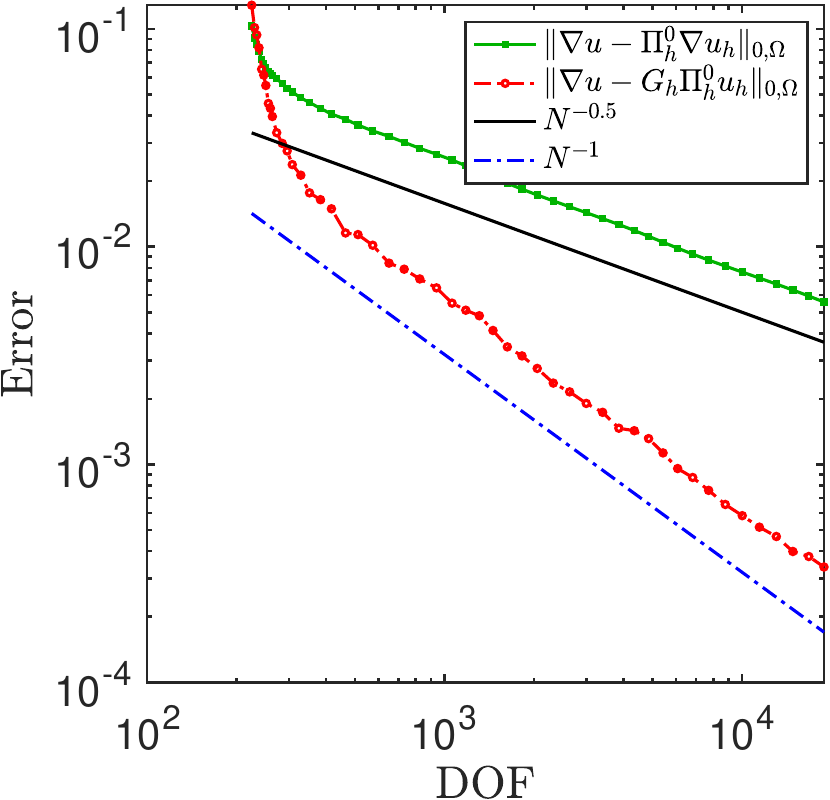}}
  \subcaptionbox{\label{fig:lshape_ind}}
   {\includegraphics[width=0.5\textwidth]{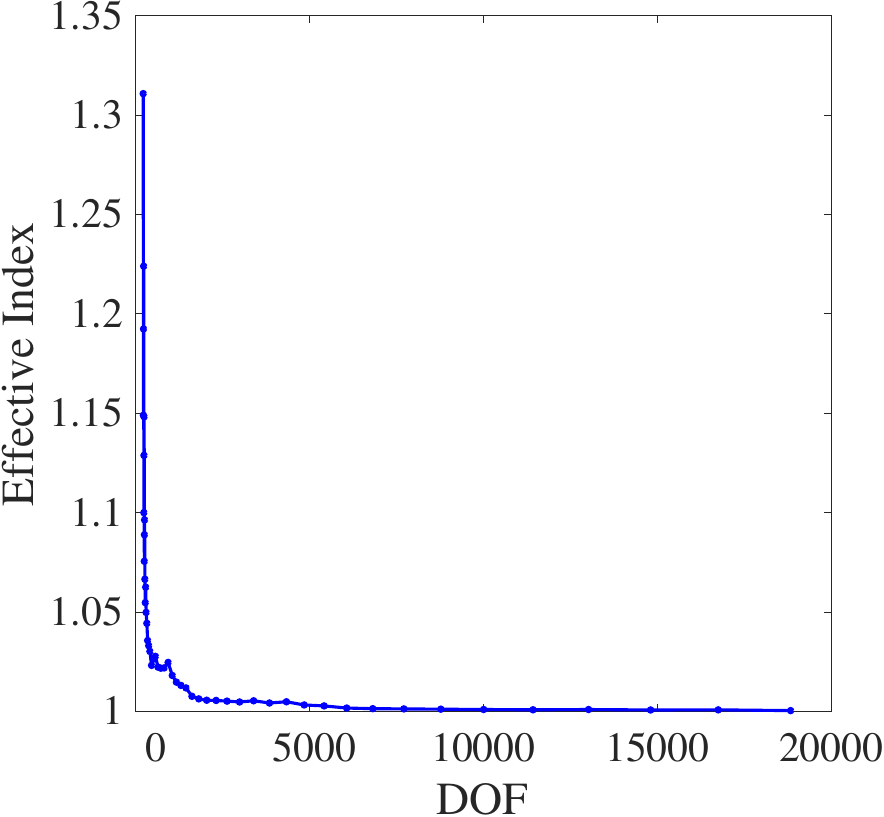}}
   \caption{Numerical result for test case 2: (a) Numerical errors; (b) Effective index.}
   \label{fig:lshape_result}
\end{figure}

To resolve the singularity, we
use the adaptive virtual element method described in Section \ref{sec:avem}.
The initial mesh is plotted in Figure \ref{fig:lshape_init}, which is a uniform mesh consisting of
square elements.   In Figure \ref{fig:lshape_adaptive}, we show the corresponding adaptively
refined mesh.  It is not hard to see that the refinement is conducted near the singular point.

In Figure \ref{fig:lshape_error}, we  depict the rates of convergence for discrete $H_1$ semi-error  $\|\nabla u - \nabla \Pi^0_h u_h\|_{0, \Omega}$
and the discrete recovery error $\|\nabla u -\Pi^0_h G_hu_h\|_{0, \Omega}$.  From the plot, we can clearly observe $\mathcal{O}(h)$ optimal convergence
for the virtual element gradient and $\mathcal{O}(h^2)$ superconvergence for the recovered gradient for the adaptive virtual element method.
It means the recovery-based {\it a posteriori} error estimator \eqref{equ:localind} is robust. To quantify the performance of the error estimator, we draw
the effective index \eqref{equ:effidx} in Figure \ref{fig:lshape_ind}.   It shows that the effective index converges to 1 rapidly after a few iterations.   It means
the {\it a posteriori } error estimator is asymptotically exact  as defined in  Definition \ref{def:exact}.

\begin{figure}
   \centering
   \subcaptionbox{\label{fig:Gaussian_init}}
  {\includegraphics[width=0.47\textwidth]{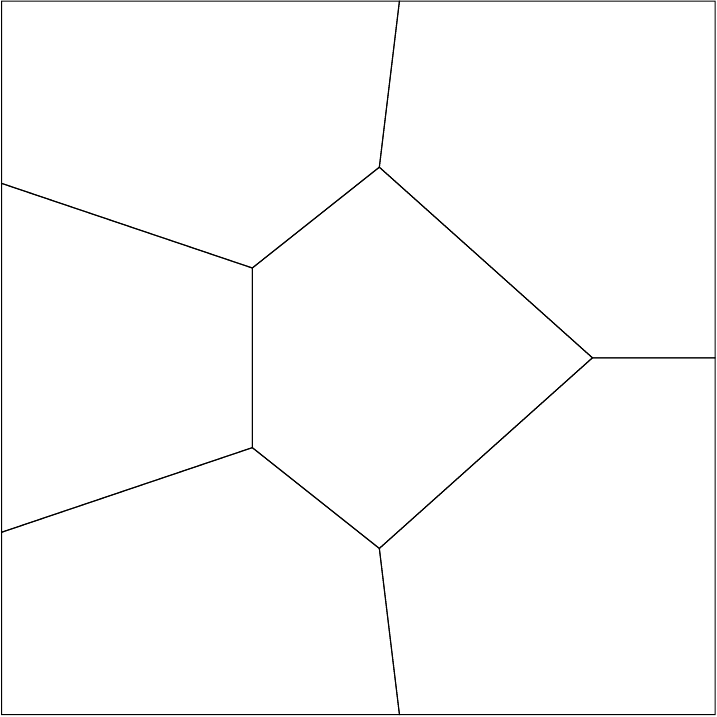}}
  \subcaptionbox{\label{fig:Gaussian_adaptive}}
   {\includegraphics[width=0.47\textwidth]{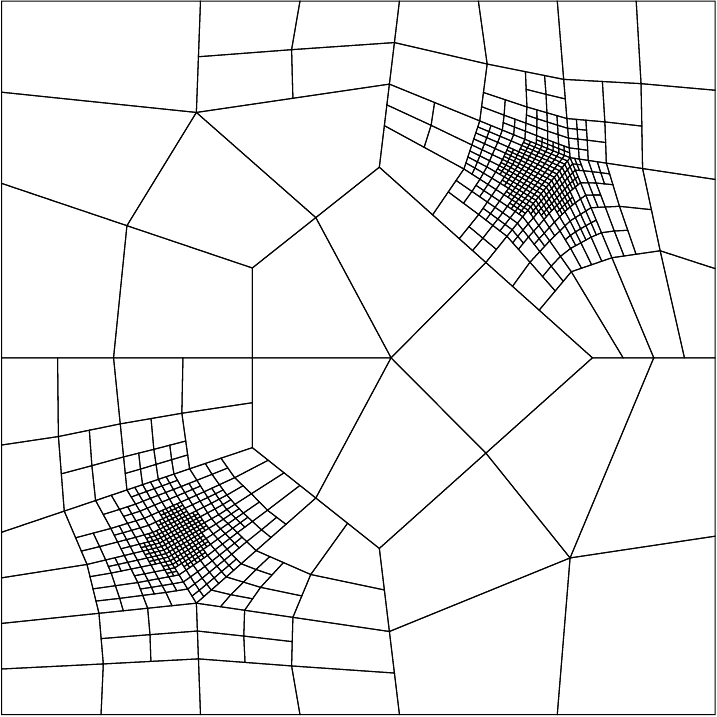}}
   \caption{Meshes for test case 3: (a) Initial mesh; (b) Adaptively refined mesh.}
   \label{fig:Gaussian_mesh}
\end{figure}

\begin{figure}
   \centering
   \subcaptionbox{\label{fig:gaussian_error}}
  {\includegraphics[width=0.49\textwidth]{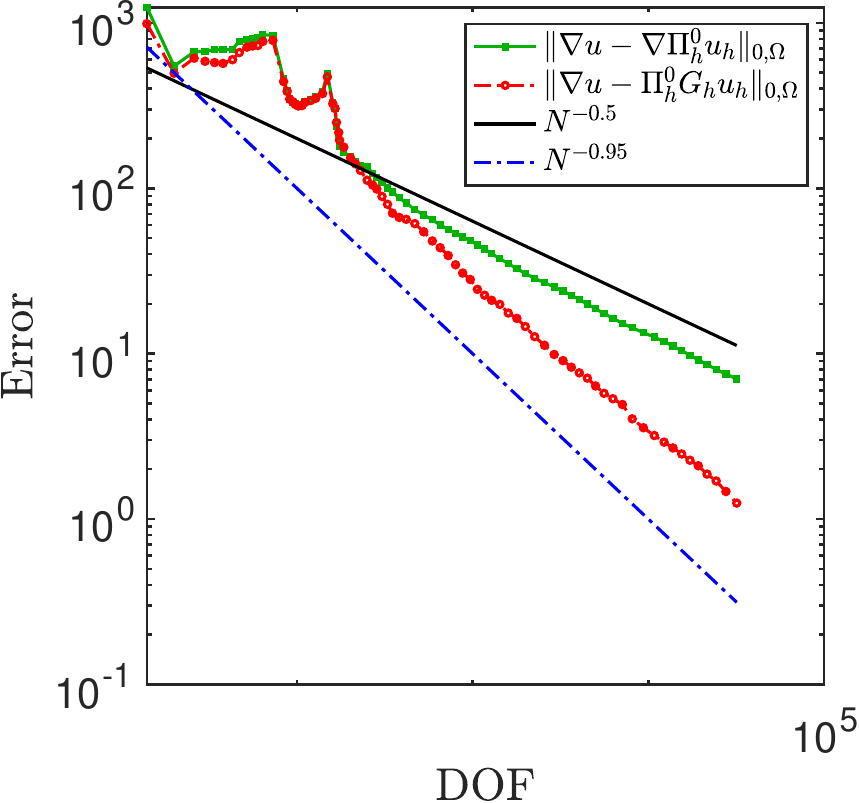}}
  \subcaptionbox{\label{fig:gaussian_ind}}
   {\includegraphics[width=0.485\textwidth]{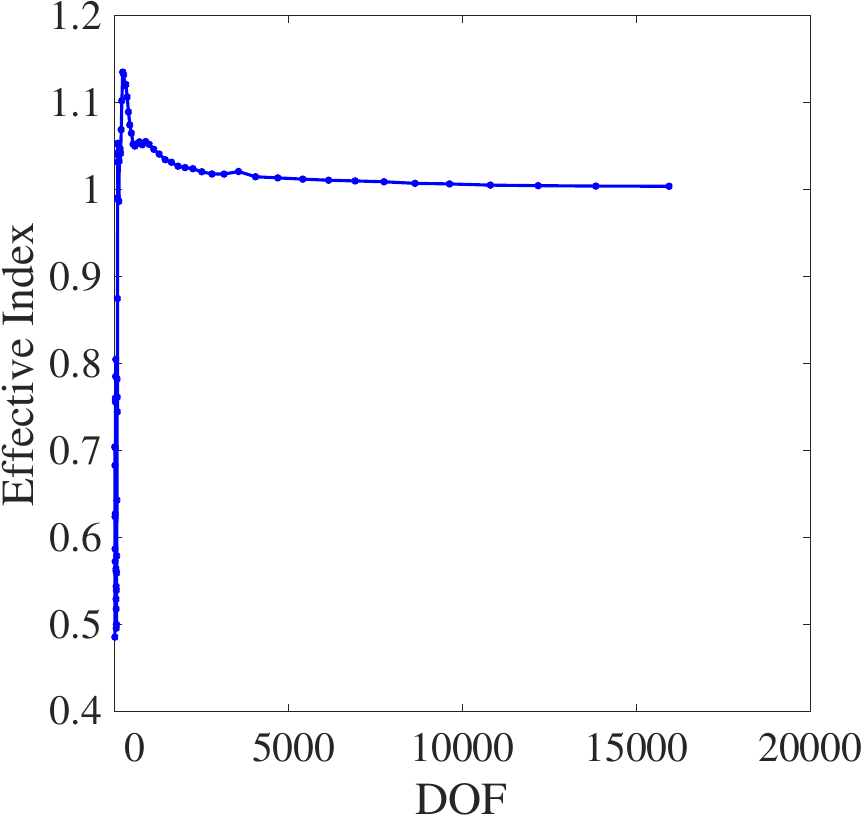}}
   \caption{Numerical result for test case 3: (a) Numerical errors; (b) Effective index.}
   \label{fig:gaussian_result}
\end{figure}

\subsection{Test case 3:  problem with  two Gaussian surfaces}  Consider  the Poisson equation \eqref{equ:model}  on the unit square with
the exact solution
 $$
 u(x,y) = \frac{1}{2\pi \sigma }\left[ e^{-\frac{1}{2}\left( \frac{x-\mu_1}{\sigma}\right)^2}
 e^{-\frac{1}{2}\left( \frac{y-\mu_1}{\sigma}\right)^2} +
 e^{-\frac{1}{2}\left( \frac{x-\mu_2}{\sigma}\right)^2}
 e^{-\frac{1}{2}\left( \frac{y-\mu_2}{\sigma}\right)^2}
 \right]
 $$
 as in \cite{AFMD2018}.   In this test,  the standard deviation is $\sigma = \sqrt{10^{-3}}$ and
 the two means are $\mu_1= 0.25$ and $\mu_2=0.75$.

The difficulty of this problem is the existence of two Gaussian surfaces, where the solution has  a  fast decay.  Here we adopt  the same
initial mesh as in  \cite{AFMD2018}, see Figure \ref{fig:Gaussian_init}.  It is a polygonal mesh, which does not resolve the Gaussian surfaces.
In Figure \ref{fig:Gaussian_adaptive}, we show the adaptively refined mesh.   Clearly, the mesh is refined near the location of
the Gaussian surfaces.  In Figure \ref{fig:gaussian_error}, we present the numerical errors.  Similar to test case 2, we can
observe the desired optimal and superconvergent  results.    Moreover,  the asymptotic exactness of the error estimator
\eqref{equ:localind} is numerically verified  in  Figure \ref{fig:gaussian_ind} by the fact  that the effective index is convergent to one.

\begin{figure}
   \centering
   \subcaptionbox{\label{fig:sharp_init}}
  {\includegraphics[width=0.47\textwidth]{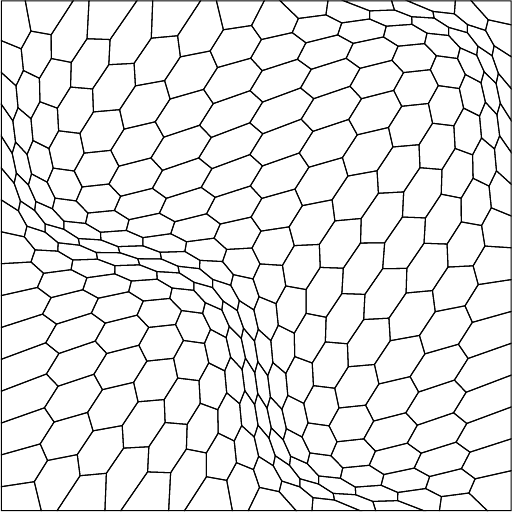}}
  \subcaptionbox{\label{fig:sharp_adaptive}}
   {\includegraphics[width=0.47\textwidth]{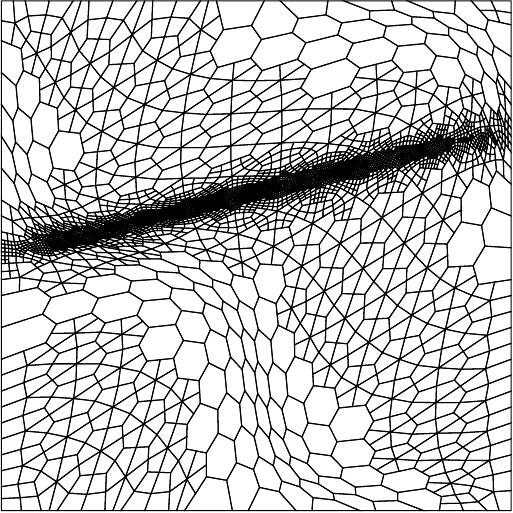}}
   \caption{Meshes for test case 4: (a) Initial mesh; (b) Adaptively refined mesh.}
   \label{fig:sharp_mesh}
\end{figure}

\begin{figure}
   \centering
   \subcaptionbox{\label{fig:sharp_error}}
  {\includegraphics[width=0.48\textwidth]{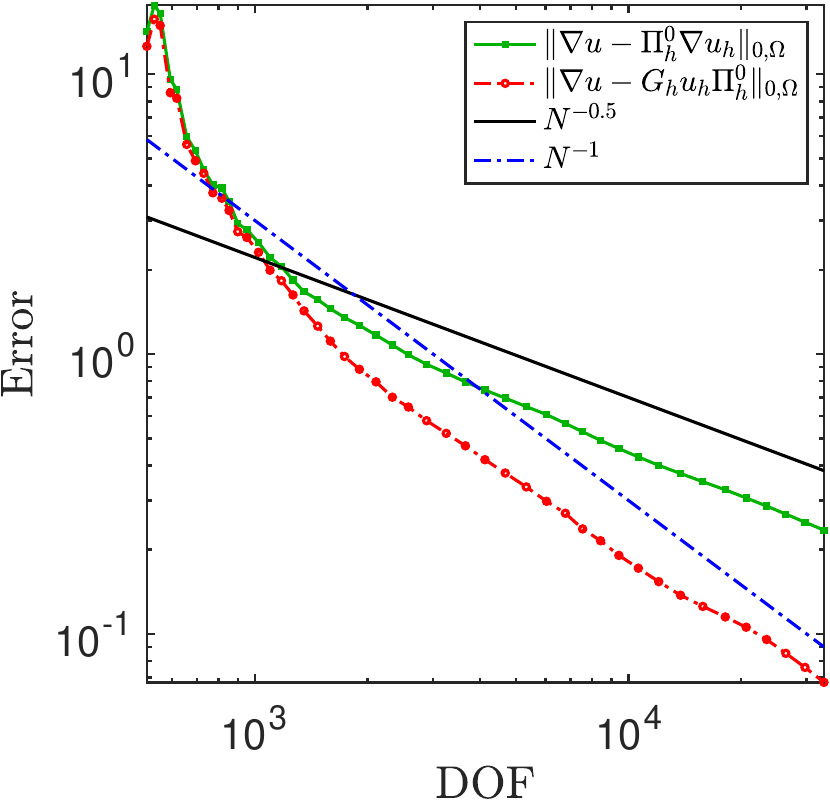}}
  \subcaptionbox{\label{fig:sharp_ind}}
   {\includegraphics[width=0.5\textwidth]{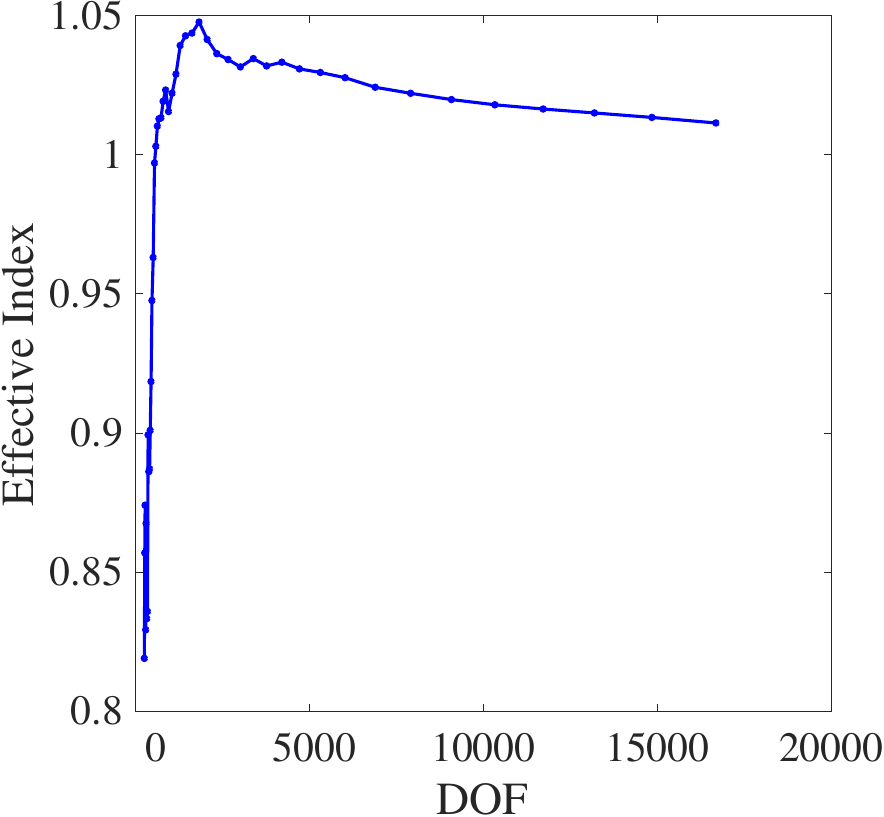}}
   \caption{Numerical result for test case 4: (a) Numerical errors; (b) Effective index.}
   \label{fig:sharp_result}
\end{figure}

\subsection{Test case 4:  problem with sharp interior layer}   As in \cite{CGPS2017, BM2015}, we
consider  the Poisson equation \eqref{equ:model} and \eqref{equ:bndcond} on the unit square   with a sharp interior layer.
The exact solution is
 $$
 u(x,y) = 16x(1-x)y(1-y)\arctan(25x-100y+25).
 $$

The initial mesh is the transformed hexagonal mesh  $\mathcal{T}_{h,5}$ as in Test Case 1, which is shown
in Figure \ref{fig:sharp_init}.   It is an unstructured polygonal mesh. The interior sharp layer is totally unresolved by the initial mesh which causes the major difficulty.
  Figure \ref{fig:sharp_adaptive} is the mesh generated by the adaptive virtual element method prescribed in Section \ref{sec:avem}.
  It is obvious that the mesh is refined to resolve the interior layer as expected.

  In Figure \ref{fig:sharp_result}, we present the qualitative results.   As anticipated, the desired $\mathcal{O}(h)$ optimal convergence rate for the virtual element gradient and
  $\mathcal{O}(h^2)$ superconvergence rate for the recovered gradient can be numerically observed.  Also, the limit of the effective index is numerically proved to be one,
  which validates the asymptotic exactness of the error estimator \eqref{equ:localind}.

\section{Conclusion}
\label{sec:con}

In this paper, a superconvergent gradient recovery method for the virtual element methods is introduced.  The proposed post-processing technique
 uses only the degrees of freedom which are the only data directly obtained from the virtual element methods.  It generalizes the idea of polynomial preserving recovery \cite{ZN2005, NZ2005} to general polygonal meshes.   Theoretically, we prove the proposed gradient recovery method is bounded and consistent.   It meets the standard of a good gradient recovery technique in \cite{AO2000}.   Numerically,  we validated the superconvergence of the recovered gradient using the virtual element solution on several different types of general polygonal meshes including  meshes with non-convex elements.   In the future, it would be
interesting to present a theoretical proof of those superconvergence for the virtual element method.

  Its capability of serving as  {\it a posteriori} error estimators is also exploited.   The asymptotic exactness of the recovery-based {\it a posteriori} error estimator is numerically verified by three benchmark problems.  To the best of our knowledge, it is the first recovery-based {\it a posteriori} error estimator for the virtual element methods.   Compared to the existing  residual type {\it a posteriori}  error estimators,  it has several advantages:  (i) it is  simple in both the idea and implementation,
  which makes it more realistic for practical applications;  (ii) the unique  characterization of the error estimator  is asymptotic exactness, which  prevails over all other {\it a posteriori} error estimators in the literature  for the virtual element  methods.

The application of gradient recovery is not limited to adaptive methods. It has also been applied to many other fields, like enhancing eigenvalues\cite{NZZ2006, NZ2012, GZZ2017} and designing new numerical methods for higher order PDEs\cite{CGZZ2017, GZZ2018, GZZ2018b, XGZ2019}.  We will make use of those advantages of gradient recovery to study more interesting real application problems in  future work.

\section*{Ackowledgement}
The authors thank the anonymous referees for their comments and suggestions which significantly improve the quality of this paper.

\bibliographystyle{siam}
\bibliography{mybibfile}

\end{document}